\documentclass[12pt, oneside]{amsart}

\usepackage{amsmath,amssymb,cite,mathrsfs,tikz-cd}
\usepackage[all]{xy}
\usepackage{typearea} 
\usepackage[backref=page]{hyperref} 

\renewcommand*{\backrefalt}[4]{{\tiny%
    (\ifcase #1 Not cited.%
          \or Cited on p.~#2.%
          \else Cited on pp. #2.%
    \fi%
    )}}
    
\usepackage{color} 

\usepackage{epstopdf} 
\usepackage{booktabs}

\newtheorem{teo}{Theorem}[section]
\newtheorem{thm}[teo]{Theorem}
\newtheorem{prop}[teo]{Proposition}
\newtheorem{lemma}[teo]{Lemma}
\newtheorem{cor}[teo]{Corollary}

\newtheorem{defn}[teo]{Definition}

\newtheorem{def-prop}[teo]{Definition-Proposition}

\newtheoremstyle{named}{}{}{\itshape}{}{\bfseries}{.}{.5em}{\thmnote{#3 }#1}
\theoremstyle{named}

\makeatletter
\newcommand{\neutralize}[1]{\expandafter\let\csname c@#1\endcsname\count@}
\makeatother

\numberwithin{equation}{section}

  \newcommand{\A}{\mathbb{A}}
  \newcommand{\C}{\mathbb{C}}

  \newcommand{\Q}{\mathbb{Q}}
  \newcommand{\R}{\mathbb{R}}

  \newcommand{\Z}{\mathbb{Z}}

\newcommand{\PP}{\mathbb{P}}

\newcommand{\calA}{\mathcal{A}}

\newcommand{\calL}{\mathcal{L}}

  \renewcommand{\bar}{\overline}

  \providecommand{\frac}[1]{\operatorname{Frac}(#1)}

  \newcommand{\tor}{\operatorname{tor}}

  \newcommand{\rank}{\operatorname{rank}}

\DeclareMathOperator{\an}{an}

\DeclareMathOperator{\Hom}{Hom}

\DeclareMathOperator{\Lie}{Lie}

\DeclareMathOperator{\Pic}{Pic}

\DeclareMathOperator{\vol}{vol}

  \renewcommand{\lim}{\operatorname{lim}}
  \renewcommand{\deg}{\operatorname{deg}}

\newcommand{\cE}{\mathcal{E}}
\newcommand{\cF}{\mathcal{F}}

\newcommand{\cI}{\mathcal{I}}

\newcommand{\cM}{\mathcal{M}}
\newcommand{\cO}{\mathcal{O}}

\newcommand{\cZ}{\mathcal{Z}}

\usepackage{lmodern}
\usepackage{microtype}
\usepackage{amsmath,amssymb,amsthm,mathtools,mathrsfs}
\usepackage{enumitem}
\usepackage{booktabs}
\usepackage{hyperref}

\newcommand{\Gm}{\mathbb G_{\mathrm m}}

\newcommand{\NS}{\operatorname{NS}}
\newcommand{\Bad}{\operatorname{Bad}}
\newcommand{\gon}{\operatorname{gon}}
\newcommand{\Sym}{\operatorname{Sym}}

\newcommand{\Bfun}{\mathscr B}

\title{On the Torsion Subgroup of the N\'eron--Severi Group}
\author{Kaiyuan Gu}
\date{August 30, 2026}
\hypersetup{
  pdftitle={On the Torsion Subgroup of Neron--Severi Group},
  pdfauthor={Kaiyuan Gu},
  pdfsubject={Proof-audited revised version}
}

\begin{document}

\begin{abstract}
\begingroup
We prove an explicit bound for the torsion subgroup of the N\'eron--Severi group of a smooth projective complex variety. The proof combines the Diophantine-approximation method of \cite{GaoGu_UBC} with Lefschetz fibrations. As an application, we deduce an explicit bound for the torsion sections of a traceless abelian scheme over a complex curve.
\endgroup
\end{abstract}

\maketitle

\tableofcontents

\section{Introduction}

\begingroup
The goal of this paper is to prove an explicit bound for the torsion subgroup of the N\'eron--Severi group of a smooth projective complex variety equipped with a very ample line bundle. This work is a sequel to \cite{GaoGu_UBC}, where we developed a Diophantine-approximation method for uniformity questions concerning small points on abelian varieties.

For later use, define
\[
\Bfun(0):=1,
\qquad
\Bfun(q):=\bigl(768q^5(q+2)\bigr)^q\quad(q\geq1).
\]

Our main result is as follows.
\begin{thm}\label{thm:polarized-variety}
Let $X$ be a smooth projective connected variety over $\C$, endowed with a very ample line bundle $L$. If $n:=\dim X\geq2$, then
\[
\#\NS(X)_{\tor}\leq \Bfun\bigl(q(X,L)\bigr),
\]
where $q(X,L)$ is the genus of a smooth complete-intersection curve cut out by $n-1$ general sections of $L$, namely
\[
q(X,L)=1+\frac12\left(K_X\cdot L^{n-1}+(n-1)L^n\right).
\]
\end{thm}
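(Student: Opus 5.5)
Our plan is to reduce the torsion of $\NS(X)$ to torsion sections of an abelian scheme over a curve, using a Lefschetz pencil, and then to bound those sections with the Diophantine-approximation method of \cite{GaoGu_UBC}.

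\textbf{Reduction to $H_1$ of a curve.} Since $\NS(X)_{\tor}\cong H^2(X,\Z)_{\tor}\cong H_1(X,\Z)_{\tor}$ (the latter by the universal coefficient theorem), it suffices to bound $\#H_1(X,\Z)_{\tor}$. Let $C\subset X$ be the smooth complete-intersection curve cut out by $n-1$ general sections of $L$, so that $g(C)=q(X,L)$ by adjunction. By the Lefschetz hyperplane theorem, applied $n-1$ times, the map $H_1(C,\Z)\to H_1(X,\Z)$ is surjective. If $q=0$, then $H_1(X,\Z)=0$ and the bound $\Bfun(0)=1$ holds. For $q\geq1$, the group $H_1(X,\Z)$ is a quotient of $\Z^{2q}$, so its torsion is a finite group generated by at most $2q$ elements. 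That alone gives no bound on the order, so the exponent and the orders must be controlled geometrically.

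\textbf{Geometric interpretation.} Reduce to the case $n=2$ by passing to a general complete-intersection surface $S$, where Lefschetz again gives $H_1(S)\cong H_1(X)$. Choose a Lefschetz pencil on $S$, blow up its base points to get a fibration $f\colon\tilde S\to\PP^1$ with fibres of genus $q$, and note $H_1(\tilde S)=H_1(S)$. The cokernel of $H_1(\text{fibre})\to H_1(\tilde S)$ vanishes because $\PP^1$ is simply connected and the fibration has sections. Hence $H_1(\tilde S)$ is the quotient of $H_1(C)$ by the span of the vanishing cycles. Dually, torsion line bundles on $S$ restrict injectively to the generic fibre, since a torsion class trivial on fibres comes from $\PP^1$. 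They therefore give torsion sections of the Jacobian scheme $J\to\PP^1$ of the fibration, modulo its fixed part. The fixed part is $\mathrm{Alb}$-related and has no torsion contribution to $\NS$. So we may pass to the traceless quotient $A$, an abelian scheme of relative dimension at most $q$ over an open subset of $\PP^1$. Torsion in $\NS(S)$ then injects into $A(\C(\PP^1))_{\tor}$, or more precisely into the part of it killed by the monodromy constraints.

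\textbf{Bounding torsion sections.} This is the main step, and we expect it to be the obstacle. Torsion sections are points of Néron--Tate height zero, so the counting and approximation result of \cite{GaoGu_UBC} for small points on abelian schemes should give a bound exponential in the dimension. To make it explicit, we would bound the degree of the modular map and of the Hodge bundle. The degree of the Hodge bundle is bounded by the number of singular fibres, which is $\chi_{\mathrm{top}}$-controlled. We would then express that number in terms of $q$ and $L$ via $\delta=e(\tilde S)+4(q-1)$, and feed these quantities into the counting theorem.

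An alternative for the counting is purely topological, and we would try it first because it is cleaner. The monodromy representation $\pi_1(\PP^1\setminus\Delta)\to Sp_{2q}(\Z)$ is generated by symplectic transvections. Torsion sections correspond to monodromy-invariant elements of $(\Q/\Z)^{2q}$, so their number is governed by the gcd of the minors of the matrix of vanishing cycles. Bounding this number explicitly by $(768q^5(q+2))^q$ needs an effective Hermite/Minkowski-type argument. We would use the fact that the span of the vanishing cycles has finite index and contains a basis-sized subset with controlled intersection numbers, each intersection being at most $1$ up to sign. Producing vectors of height $O(q^{c})$ and then applying Hadamard's inequality would give the shape $(\mathrm{poly}(q))^{q}$; matching the constants $768$ and $q^5(q+2)$ is where we expect the real work lies.
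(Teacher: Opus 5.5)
You have a genuine gap at the step that carries the whole theorem: bounding the torsion sections by a quantity depending only on $q$.

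The reduction steps broadly follow the paper: weak Lefschetz down to a surface, a Lefschetz pencil, a blow-up whose exceptional curves are sections, and $\NS_{\tor}$ mapped into torsion of the generic Jacobian modulo its trace. Some points there are glossed over but repairable:
\begin{itemize}
\item You conflate torsion line bundles with torsion classes in $\NS$; the former include $\Pic^0(S)_{\tor}$.
\item A class trivial on the generic fibre is a \emph{vertical} divisor class, not necessarily one pulled back from $\PP^1$. You need that vertical classes form a free group, which follows from Zariski's lemma together with a section.
\item You must identify the trace $T$ with the image of $\Pic^0(\widetilde S)$ (fixed-part theorem). You must also lift the image in $(A/T)(K)_{\tor}$ to a finite $\Gamma\subset A(K)_{\tor}$ with $\Gamma\cap T(K)=\{0\}$.
\end{itemize}

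\textbf{Why your bounding plan fails.} You propose to bound the Hodge degree by the number of singular fibres and feed that into a counting theorem. But $\delta=e(\widetilde S)+4(q-1)=e(S)+L^2+4(q-1)$ is not bounded in terms of $q$. A scroll over a curve of genus $q$, embedded by a suitably positive $\mathcal O(1)$, has sectional genus $q$ and arbitrarily large $L^2$. So any estimate that grows with $h_{\mathrm{Fal}}(A)$ or with $\delta$ depends on $(X,L)$, not on $q(X,L)$ alone. Also, a statement about abelian schemes cannot be applied directly. The Jacobian has bad reduction at every singular fibre, and abelian schemes over $\PP^1$ are constant.

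\textbf{The missing idea.} The approximation argument needs no absolute bound on $h_{\mathrm{Fal}}$, only a bound on the ratio $h_{\mathrm{Fal}}(A)/S(A)$, where $S(A)=\sum_v 1/\#\Phi_v$. Theorem~\ref{thm:local-global} gives $N_\Gamma\le 32g^3(g+2)\,h_{\mathrm{Fal}}(A)/S(A)$. On $\PP^1$ this ratio is bounded independently of the number $s$ of bad places, by combining three inputs:
\begin{itemize}
\item Deligne's inequality $h_{\mathrm{Fal}}\le\frac g2(s-2)$;
\item the boundary estimate $\sum_v\#\Phi_v\le 96\,h_{\mathrm{Fal}}$;
\item Cauchy--Schwarz, $\bigl(\sum_v\#\Phi_v\bigr)\bigl(\sum_v 1/\#\Phi_v\bigr)\ge s^2$.
\end{itemize}
Together they give
\[
\frac{h_{\mathrm{Fal}}(A)}{S(A)}\le\frac{h_{\mathrm{Fal}}(A)\sum_v\#\Phi_v}{s^2}\le 96\Bigl(\frac{h_{\mathrm{Fal}}(A)}{s}\Bigr)^2\le 24g^2,
\]
which is where $768=32\cdot 24$ comes from (Lemma~\ref{lem:ratio}). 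An induction over abelian subvarieties, using $\Gamma\cap A_0(K)=\{0\}$, then turns the bound on $N_\Gamma$ into a bound on $\#\Gamma$.

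The Lefschetz hypothesis is used essentially. One node per singular fibre gives toric rank at most one. That is what permits a bump function on the one-dimensional skeleton which vanishes on $\Phi_v$, so it does not affect jets at rational torsion points. It also has $\beta_v^2=0$ and mean at least $1/(32\#\Phi_v)$.

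\textbf{The topological alternative does not fill the gap.} Invariant torsion modulo the fixed part is the Pontryagin dual of $(H_1(F,\Z)/V)_{\tor}\cong H_1(\widetilde S,\Z)_{\tor}$, so it merely restates the problem. The span $V$ has finite index only when $b_1(S)=0$. Nothing forces intersection numbers of vanishing cycles to lie in $\{0,\pm1\}$, so there is no Hadamard-type input that bounds the index.
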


As an application, we obtain a bound for the torsion sections of an abelian scheme over a curve.
\begin{cor}\label{cor:AV}
Let $\calA/B$ be a traceless abelian scheme over a smooth projective connected complex curve $B$. Put $q=g(B)$ and $g=\dim(\calA/B)$, and define
\[
Q:=\frac{(8g+3)!\,3^{8g}}2(q+1).
\]
Then
\[
\#\calA(B)_{\tor}\leq\Bfun(Q).
\]
\end{cor}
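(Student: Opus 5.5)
We derive Corollary~\ref{cor:AV} from Theorem~\ref{thm:polarized-variety} applied to a suitable smooth projective model of $\calA$. The link is the standard identification of torsion sections of a traceless abelian scheme with torsion in a N\'eron--Severi group.

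\textbf{Step 1: reduce to a N\'eron--Severi group.} Take a smooth projective compactification $\bar{\calA}\to B$. By Lang--N\'eron, tracelessness makes $\calA(B)$ finitely generated. A section of $\calA^\vee$ is a family of degree-zero line bundles on the fibres, so there is an injection
\[
\calA^\vee(B)\hookrightarrow \NS(\bar\calA)\ \text{(up to fibral classes).}
\]
Tracelessness is what kills the $\Pic^0$ contribution. Under this map, torsion sections go to torsion classes, since $\Pic^0(\bar\calA)$ comes from $\Pic^0(B)$ when the trace vanishes. Fibral and vertical components contribute only a free part. Since $\calA$ and $\calA^\vee$ are isogenous over the generic point, we need a bound for $\calA(B)_{\tor}$ itself, not just up to isogeny. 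The cleaner route is to use the Poincar\'e bundle on $\calA\times_B\calA^\vee$, which gives an injection
\[
\calA(B)_{\tor}\hookrightarrow \NS(\overline{\calA^\vee})_{\tor}.
\]
Alternatively, apply the argument to $\calA^\vee$ directly, since $\calA^{\vee\vee}=\calA$.

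\textbf{Step 2: compactify with a very ample bundle of controlled genus.} After a base change we may assume the monodromy is unipotent, but base change would change $q$. Instead, I would level the scheme with full level-$3$ structure, which explains the factor $3^{8g}$. This gives a map from $B'$, a cover of $B$ of degree at most $\#\mathrm{GL}_{2g}(\Z/3)\le 3^{4g^2}$, to a fine moduli space. The toroidal compactification of the pulled-back universal family is then smooth projective. The torsion injects under pullback along the finite cover, up to a controlled index. Better still, I would build $\bar\calA$ as a closed subvariety of $\PP^N\times B$ via a theta embedding with the $3$-times principal-type polarization, and take $L=\cO(1)\boxtimes M$ with $\deg M$ linear in $q+1$.

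\textbf{Step 3: compute the genus.} Compute $q(\bar\calA,L)=1+\tfrac12\bigl(K\cdot L^{g}+gL^{g+1}\bigr)$ for this $L$ on the $(g+1)$-fold. The intersection numbers are governed by the degree of the Hodge bundle, which Arakelov/Faltings-type inequalities bound by $O(g(q+1))$, and by $\deg$ of the embedding, which is of the form $3^{g}\cdot g!$-type. Expanding $(L_1+L_2)^{g+1}$ produces combinatorial factors bounded by $(8g+3)!$. The aim is $q(\bar\calA,L)\le Q$. Since $\Bfun$ is increasing, Theorem~\ref{thm:polarized-variety} then gives the bound. The hypothesis $n\ge 2$ holds because $g\ge1$; if $g=0$ the statement is trivial.

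\textbf{Main obstacle.} The hard part is Step 2 together with Step 3. We need an explicit smooth projective model with a very ample bundle whose sectional genus is bounded only in terms of $g$ and $q$, uniformly over all degenerations. My first attempt would avoid resolving singular degenerations by restricting to the open set of good reduction and using the N\'eron model. A smooth projective model then only needs to be smooth, not compatible with the group law, so I would take a resolution inside a fixed Chow-type embedding and bound $K$ via adjunction in $\PP^N\times B$.
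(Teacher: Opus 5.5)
There is a genuine gap, and part of it comes from misreading the setting. Since $\calA\to B$ is an abelian scheme over the whole projective curve $B$, $\calA$ is already a smooth projective $(g+1)$-fold with no degenerate fibres. No compactification, toroidal model, resolution or level cover is needed, so your ``main obstacle'' is not actually there. The level-$3$ detour would also be harmful: it replaces $q-1$ by $q-1$ times the degree of an \'etale cover, which can be as large as $\#\mathrm{GL}_{2g}(\Z/3)$, so you could not reach the stated $Q$. The absence of bad fibres is exactly what makes the genus computation easy. Translation-invariant differentials give $K_{\calA}=\pi^*(K_B\otimes\omega_{\calA})$, and Deligne's Arakelov inequality with $s=0$ gives $\deg\omega_{\calA}\leq g(q-1)$.

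What is missing are the two ingredients that actually produce $Q$.

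\emph{A principal polarization.} $\calA^\vee$ need not be principally polarized, so no ``$3$-times principal'' theta embedding is available, and $h^0$ of whatever polarization you use is uncontrolled. The paper replaces $\calA$ by $\cZ=(\calA\times_B\calA^\vee)^4$. This scheme is traceless and contains $\calA$, so $\#\calA(B)_{\tor}\leq\#\cZ(B)_{\tor}$. By Zarhin's trick it satisfies $\cZ\simeq\cZ^\vee$, and combined with $\cZ(B)_{\tor}\simeq\NS(\cZ^\vee)_{\tor}$ this lands in $\NS(\cZ)_{\tor}$. In relative dimension $8g$ one has $h^0(3\Theta)=3^{8g}$. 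That is the source of $3^{8g}$ and $(8g+3)!$, not the degree of a level cover or a binomial expansion.

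\emph{An explicit very ample twist.} You assert that $\cO(1)\boxtimes M$ with $\deg M$ linear in $q+1$ works, but give no mechanism controlling the twist. Even realizing $\calA$ inside $\PP^N\times B$ requires a twist of $\pi_*\calL$ to be globally generated, which is the same problem. The paper shows $\calL\otimes\pi^*M^{\otimes m_0}$ is very ample for $m_0=\lceil 2q+\frac g2(q-1)+1\rceil$ in three steps:
\begin{enumerate}[label=(\roman*)]
\item $E=\pi_*\calL$ is semistable, because the theta group acts irreducibly on $H^0(A,L)$;
\item Moret-Bailly's formula gives $\mu(E)=-\frac12\deg\omega_{\calA}$;
\item the Ionescu--Toma criterion (a semistable bundle of degree $\geq 2qr+1$ is very ample) applies to $E\otimes M^{\otimes m_0}$.
\end{enumerate}
Then $\calL^{g+1}=0$ and $(\pi^*M)^2=0$ give $(\calL')^{g+1}=m_0(g+1)!\,h^0(A,L)$ and $K_{\calA}\cdot(\calL')^g=(2q-2+\deg\omega_{\calA})\,g!\,h^0(A,L)$. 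Hence $q(\calA,\calL')<\frac12(g+3)!\,h^0(A,L)(q+1)$, and substituting relative dimension $8g$ and $h^0=3^{8g}$ gives $Q$.

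Your Step 1, the passage to N\'eron--Severi torsion of the dual via the Poincar\'e bundle, is in line with the paper. Steps 2--3 need to be replaced by the argument above.
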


Compared with \cite[Theorem~1.3]{GaoGu_UBC}, this substantially improves the dependence on the genus of the base curve: for fixed relative dimension $g$, the bound in \cite{GaoGu_UBC} is double exponential in $q$, whereas the bound above has the form $\exp(O(q\log q))$.

\subsection{Comparison with the previous result}
In \cite{NeronSeveri}, Hyuk Jun Kweon proved, to the author's knowledge, the first explicit bound for $\NS(X)_{\tor}$.

\begin{thm}[Kweon]\label{thm:Kweon}
Let $\iota:X\hookrightarrow\PP^r$ be a smooth $n$-dimensional projective variety whose homogeneous ideal is generated by polynomials of degree at most $d$. Then
\[
\#\NS(X)_{\tor}\leq2^{d^{r^2+2r\log_2r}}.
\]
\end{thm}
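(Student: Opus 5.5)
The statement to prove is Kweon's theorem (Theorem~\ref{thm:Kweon}), which the paper quotes from \cite{NeronSeveri}. The plan is to bound $\NS(X)_{\tor}$ by a Betti-number-type invariant and then control that invariant by elimination-theoretic complexity bounds.

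\emph{Reduction to cohomology.} Over $\C$, the exponential sequence together with Lefschetz $(1,1)$ identifies $\NS(X)_{\tor}$ with $H^2(X,\Z)_{\tor}$. By universal coefficients this is $H_1(X,\Z)_{\tor}$, the torsion of the abelianized fundamental group. It would also suffice to bound $H^2(X,\Z)_{\tor}$ directly via étale cohomology with $\Z/\ell$ coefficients. The key inequality is
\[
\log_2\#H^2(X,\Z)_{\tor}\le \text{(bit-size of a presentation of a cochain complex computing } H^*(X,\Z)\text{)}.
\]
A finite CW or semialgebraic triangulation with $N$ cells yields boundary matrices with entries in $\{0,\pm1\}$ and at most $N$ rows and columns. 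By Hadamard's inequality, the torsion in homology is then bounded by a product of elementary divisors, at most $\prod_i \|\text{row}_i\|\le N^{N/2}$. So the target reduces to producing a triangulation of $X$ whose cell count $N$ satisfies $N^{N/2}\le 2^{d^{r^2+2r\log_2 r}}$.

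\emph{Complexity of a triangulation.} View $X$ as a real semialgebraic subset of $\R^{2r+2}$ through the affine charts of $\PP^r$. It is cut out by polynomials of degree $\le d$ in $O(r)$ variables, with at most $\binom{r+d}{d}$ of them. Next, invoke effective cylindrical algebraic decomposition, or better, a Basu--Pollack--Roy-style effective triangulation, to get a complex with $N\le d^{O(r^2)}$ cells. Doubly-exponential CAD bounds would be too weak. The exponent $r^2+2r\log_2 r$ suggests the following bookkeeping. We need $\log_2 N^{N/2}=\tfrac N2\log_2 N\le d^{r^2+2r\log_2 r}$. That holds if $N\le d^{r^2}\cdot r^{O(r)}$-type bounds, absorbing $r^{2r}=2^{2r\log_2 r}\le d^{2r\log_2 r}$ when $d\ge2$; the case $d=1$ is a linear space, where the torsion vanishes. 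Alternatively, avoid triangulation entirely. Choose a generic linear projection to get a Lefschetz pencil, or a degree-$\le d^{r}$ branched cover structure. Then present $\pi_1$, or $H_1$, through monodromy of a generic plane-curve section, whose degree is $\le d^{r-n}$ by Bézout. Zariski--van Kampen then gives a presentation with $O(\deg^2)$ generators and relations, with braid-monodromy entries bounded exponentially.

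\emph{Main obstacle.} The hard step is the effective cell count with only singly-exponential dependence of the form $d^{O(r^2)}$, with explicit constants matching the stated exponent. I would first try the Zariski--van Kampen route through a generic plane section, because Lefschetz hyperplane theorems give $H_1(X)\cong H_1(C)/\langle\text{vanishing cycles}\rangle$ from a generic curve section $C$ for $n\ge2$. That reduces the problem to bounding the torsion of a cokernel of an integer matrix. Its size is controlled by the genus of $C$, which is polynomial in $d^{r}$, and by the norms of monodromy images of vanishing cycles; the norms are bounded by counting braid crossings via Bézout. Hadamard then yields $\#\le (\text{entry bound}\cdot\sqrt{2g(C)})^{2g(C)}$. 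The remaining work is to check that this is $\le 2^{d^{r^2+2r\log_2 r}}$, with explicit constants.
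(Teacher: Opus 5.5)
The paper gives no proof of this statement. It quotes it from Kweon \cite{NeronSeveri} only as a benchmark, so your argument has to stand on its own, and as written it does not prove the inequality.

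\textbf{The first route.} Your soft reductions are fine. Over $\C$ one has $\NS(X)_{\tor}=H^2(X,\Z)_{\tor}\cong H_1(X,\Z)_{\tor}$. For a simplicial complex, Hadamard's inequality applied to the $\{0,\pm1\}$ matrix of $\partial_2$ bounds the torsion of $H_1$.

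All of the quantitative content, however, sits in the step you flag as the main obstacle, and you do not supply it:
\begin{itemize}
\item Effective triangulations of semialgebraic sets go through cylindrical decomposition and are doubly exponential in the number of real variables.
\item The singly exponential results of Basu--Pollack--Roy type either bound Betti numbers with field coefficients, or produce homotopy models with unspecified constants in the exponent. Field-coefficient Betti numbers control how many cyclic factors $H_1(X,\Z)_{\tor}$ has, but not their orders.
\end{itemize}
So ``$N\le d^{O(r^2)}$'' is not something you can invoke. Even granting $N\le d^{Cr^2}$ for some unspecified $C$, it would not help. The requirement that $\log_2$ of your torsion bound be at most $d^{r^2+2r\log_2 r}$ forces $N$ to be at most about $d^{r^2+2r\log_2 r}$, so the leading exponent must be $r^2$. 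Your ``bookkeeping'' is therefore really a sharp, explicit cell count, and that count is the whole theorem.

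\textbf{The Zariski--van Kampen fallback} has the same hole. The identification $H_1(X,\Z)\cong H_1(C,\Z)/\langle\delta_1,\dots,\delta_\mu\rangle$ is correct, where $C$ is a curve section and the $\delta_i$ are the vanishing cycles of a Lefschetz pencil on a surface section. In a fixed integral basis of $H_1(C,\Z)$, the torsion of this quotient is at most the product of the norms of any maximal independent subset of the $\delta_i$.

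Nothing in your sketch bounds those coordinates. The $\delta_i$ come from transport along paths in the punctured base, and each turn around a critical value acts by a Picard--Lefschetz transvection. The entries can therefore grow exponentially in the length of the resulting monodromy words. ``Counting braid crossings via B\'ezout'' would need all of the following:
\begin{itemize}
\item an explicit model of $C$ with an explicit integral basis;
\item explicit paths in the base;
\item real-algebraic crossing counts along those paths;
\item a conversion of braid words into integer matrices with controlled entries.
\end{itemize}
This is the effective-topology problem of your first route in another guise. Your final ``check'' is thus the proof itself, not a verification.

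\textbf{Comparison with the paper's own method.} The paper follows exactly this geometric skeleton, but algebraically. Lemma~\ref{lem:NS-to-Jacobian} embeds $\NS(X)_{\tor}$ into the torsion of the fibre Jacobian modulo its fixed part, which is the analogue of $H_1(C)/\langle\delta_i\rangle$. It then bounds that group by heights through Theorem~\ref{thm:P1}, instead of computing monodromy. Combined with the estimate for $q(X,L)$ in the introduction, this gives, over $\C$ and for $n\ge2$,
\[
\#\NS(X)_{\tor}\le\Bfun\bigl(1+\tfrac12(r-n)(d-1)d^{r-n}\bigr).
\]
This is below Kweon's bound whenever $d\ge2$; the cases $d=1$ and $n\le1$ are trivial.
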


This bound depends on the dimension of the ambient projective space. In the setting of Theorem~\ref{thm:Kweon}, take $L=\iota^*\cO(1)$. The conormal exact sequence gives the crude estimate
\[
K_X\cdot L^{n-1}\leq\bigl((r-n)d-r-1\bigr)L^n,
\]
and hence
\[
q(X,L)\leq1+\frac12(r-n)(d-1)L^n.
\]
B\'ezout's theorem gives $L^n\leq d^{r-n}$, so Theorem~\ref{thm:polarized-variety} yields
\[
\#\NS(X)_{\tor}
\leq
\Bfun\left(1+\frac12(r-n)(d-1)d^{r-n}\right).
\]
In particular, this estimate does not involve a double exponential in $r$.

\subsection{Strategy of proof}
The proof has two parts.

\subsubsection{The Gao--Gu framework}
Following \cite{GaoGu_UBC}, and using a rank-one improvement of the local bump functions, we prove the following key result.
\begin{thm}[=Theorem~\ref{thm:P1_Proof}]\label{thm:P1}
Let $K=\C(t)$, and let $A/K$ be a semistable abelian variety of dimension $g$, endowed with a polarization $L$. Assume that the toric rank of $A$ is at most one at every place. Let $A_0\subset A$ be the maximal abelian subvariety having everywhere good reduction. If $\Gamma\subset A(K)_{\tor}$ is finite and $\Gamma\cap A_0(K)=\{0\}$, then
\[
\#\Gamma\leq h^0(A,L)\bigl(768g^5(g+2)\bigr)^g.
\]
\end{thm}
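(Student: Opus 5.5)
The plan follows the Gao--Gu Diophantine-approximation scheme of \cite{GaoGu_UBC}, sharpened by the toric-rank-one hypothesis. Fix the N\'eron model of $A$ over $\PP^1$ and a symmetric ample representative of $L$. Then the N\'eron--Tate height $\hat h_L$ on $A(K)$ decomposes into local N\'eron functions $\lambda_v$ at the places $v$ of $K$. Since every torsion point has height zero, for $x,y\in\Gamma$ we get
\[
0=\hat h_L(x-y)=\sum_v \lambda_v(x-y)
\]
(the constant normalizations cancel under a suitable choice). At places of good reduction $\lambda_v\geq0$. At bad places the contribution is computed by Raynaud/Mumford uniformization: the component-group image of $x$ lives in a torus quotient $\R^{t_v}/\Lambda_v$ with $t_v\leq1$. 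There $\lambda_v$ is, up to a constant, a periodic quadratic function, i.e.\ a Bernoulli-type polynomial $B_2$ on a circle $\R/\Z$ of length determined by the monodromy pairing.

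Next comes the positivity/bump-function step. I would write the (pairwise) sum $\sum_{x,y\in\Gamma}\hat h(x-y)=0$ and express each bad local term as a quadratic form in the reductions. The quantity
\[
\sum_{x,y}B_2(\langle x-y\rangle)
\]
on a circle equals $\sum_{k\neq0}|\widehat{\mu}(k)|^2/(2\pi^2k^2)\geq0$, and is bounded below in terms of how concentrated the images of $\Gamma$ are. For toric rank one I would replace the general-dimensional bump functions of \cite{GaoGu_UBC} by a one-dimensional Fej\'er-type bump. This is the ``rank-one improvement'', and it should give the constant $768g^5(g+2)$ per dimension instead of something worse. The outcome is an inequality forcing many pairs $x\neq y$ in $\Gamma$ to have the same reduction (same component) at every bad place. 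In that case $x-y$ reduces into the identity component everywhere.

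For the counting and conclusion, pigeonhole over a fundamental domain partitioned into boxes gives a subset $\Gamma'\subset\Gamma$ of size at least $\#\Gamma/(768g^5(g+2))^g$ (roughly) whose differences have everywhere-good-toric-part behaviour. Then I argue that such differences generate points lying in a subvariety controlled by $h^0(A,L)$. Concretely, torsion points whose local heights vanish everywhere force $\hat h$-degeneracy: they lie in the kernel of the height pairing, which for a semistable $A$ with trivial trace corresponds to $A_0$ via the Néron--Tate-pairing non-degeneracy modulo the good-reduction part. Here $h^0(A,L)$ enters as the degree bound $\#(\text{fiber of }A\to A/A_0\text{ meeting }\Gamma)\leq\deg$ through a Riemann--Roch/intersection argument. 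Combined with $\Gamma\cap A_0(K)=\{0\}$, the set $\Gamma'$ injects into a set of size at most $h^0(A,L)$, giving the bound.

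The main obstacle is the local analytic inequality. One needs a lower bound for the periodic quadratic energy of a finite configuration on the circle, strong enough to beat the height identity uniformly over all bad places with explicit constants. I would first try the Fourier expansion of $B_2$ together with a Fej\'er kernel of width chosen depending on $g$, and optimize the width to obtain $768g^5(g+2)$.
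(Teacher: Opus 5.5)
There is a genuine gap. No step of your argument actually bounds $\#\Gamma$, and the concluding step is false.

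\textbf{The energy and pigeonhole steps.} Positivity of $\sum_{x,y}B_2(x-y)$ only says that the reductions of $\Gamma$ in the circles $\R^{t_v}/\Lambda_v$ are no more spread out than the uniform distribution. It holds for every configuration, so it cannot force pairs to share their reductions. A pigeonhole over the component groups at all bad places then loses a factor depending on $\#\Bad(A/K)$ and the $\#\Phi_v$, which are not bounded in terms of $g$.

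\textbf{The concluding step.} Every torsion point lies in the kernel of the N\'eron--Tate pairing, so that kernel cannot single out $A_0$. Torsion points meeting the identity component at every place are neither forced into $A_0$ nor bounded in number by $h^0(A,L)$. Concretely, in a Lefschetz pencil on a smooth surface, the places of bad reduction of the Jacobian are the irreducible one-nodal fibres of the regular surface $\widetilde X$, so $\#\Phi_v=1$ everywhere. Take $X$ an Enriques surface. The class $K_X$ restricts to a nonzero $2$-torsion point $z$ of the Jacobian $A$ of the generic fibre, by the injectivity of $\NS(\widetilde X)_{\tor}\to(A/T)(K)$ in Section~6. Meanwhile $A_0=T=0$ since $q(X)=0$, and $h^0(A,\Theta)=1$. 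Your final step would therefore give $\#\{0,z\}\leq1$.

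\textbf{The base curve is never used.} Nothing you wrote uses that the base is $\PP^1$, but the statement fails over higher-genus bases. The universal elliptic curve over $\C(X_1(N))$, $N\geq5$, is semistable with $A_0=0$ and $h^0=1$, yet has a rational point of order $N$.

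\textbf{What the paper does instead.} The missing ingredient is the paper's Diophantine-approximation mechanism, in which the position of $\Gamma$ in the component groups plays no role. Your one-dimensional bump on the circle skeleton is the right rank-one input, but it is used differently.
\begin{itemize}
\item The bump $f_v=b_v\psi$ on $\Sigma_v=\R/n_v\Z$ has $|\psi''|\leq1$ and mean $\geq(1/32-\eta)/\#\Phi_v$. It is built to \emph{vanish} on $\Phi_v$, so it is invisible at $K_v$-points and the local jet bound survives.
\item It perturbs the canonical metric, for which $\bar L^{g+1}=0$, into a semipositive one with $\bar L(\epsilon\mathbf f)^{g+1}>0$, using $\beta_v^2=0$.
\item Hilbert--Samuel plus Minkowski then give sections $s\in H^0(A,mL)$, for a sequence $m\to\infty$, of height $\geq\frac m{2g}\sum_vI_v+o(m)$.
\item Nakamaye's multiplicity estimate supplies a point of $\Gamma[0]\cup\cdots\cup\Gamma[g]$ where $s$ vanishes to order $\ell\leq g(gm/N_\Gamma+1)$.
\item The Liouville inequality bounds the height of the jet there by $\ell(g/2+1)h_{\mathrm{Fal}}(A)$. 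Comparing the two bounds gives $N_\Gamma\leq32g^3(g+2)h_{\mathrm{Fal}}(A)/S(A)$.
\end{itemize}
Genus zero enters through Deligne's inequality $h_{\mathrm{Fal}}\leq\frac g2(s-2)$. Together with $h_\partial\leq96h_{\mathrm{Fal}}$ and $h_\partial S\geq s^2$, it yields $h_{\mathrm{Fal}}/S\leq24g^2$. Hence $768g^5(g+2)=32\cdot24\,g^5(g+2)$; it does not come from optimising a Fej\'er width.

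The factor $h^0(A,L)$ and the exponent $g$ come from two sources. One is the normalisation $h^0(A',L|_{A'})/h^0(A,L)$ in $N_\Gamma$. The other is an induction on $\dim A$ through the proper abelian subvariety $A'$ realising the minimum. The hypothesis $\Gamma\cap A_0(K)=\{0\}$ is used only to dispose of the everywhere-good-reduction case, and it is inherited by $A'$.
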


\subsubsection{A Lefschetz fibration}
To bound $\NS(X)_{\tor}$ for a general polarized variety $(X,L)$, we first cut $X$ by general members of $|L|$. Integral weak Lefschetz reduces the problem to a smooth surface, which we still denote by $X$. We then choose a Lefschetz pencil in $|L|$, blow up its base locus, and apply Theorem~\ref{thm:P1} to the Jacobian of the generic fiber.

\subsection{Acknowledgements}
I thank Ziyang Gao and Xinyi Yuan for helpful discussions, and especially Xinyi Yuan for pointing out the usefulness of Lefschetz fibrations. KG is partially supported by Beijing Natural Science Foundation 24QY0003.
\endgroup

\subsection{Disclosure of AI use}

After Xinyi Yuan pointed out the method of making a Lefschetz fibration, the author realized that a bound on the torsion subgroup of N\'eron--Severi group can be proved in this way. He then used AI to find an optimal bump function $\varphi_*$ as in Lemma~\ref{lem:sharp-bump}. This yields a slightly better bound in the toric rank 1 case than the previous one obtained by \cite{GaoGu_UBC}. After the author finished the draft, he used AI to audit the proof and check the grammar.

\section{Notations and preliminaries}

\begingroup
Let $B$ be a smooth projective connected complex curve and let
\[
K=\C(B),\qquad K_0=\C(\PP^1),\qquad D:=\gon(B)=[K:K_0],
\]
where a morphism $B\to\PP^1$ of minimal degree has been fixed. The set $M_K$ of places of $K$ is naturally identified with $B(\C)$. For $v\in M_K$, let $K_v$ be the completion, $\cO_v$ its valuation ring, and $\varpi_v$ a uniformizer. We normalize the absolute value by $|\varpi_v|_v=e^{-1}$.

\subsection{Abelian varieties over function fields}
Let $A/K$ be an abelian variety of dimension $g\geq1$, endowed with a symmetric ample line bundle $L$, and assume that $A$ has semistable reduction. Let $\mathscr A^\circ$ be the identity component of its N\'eron model over $B$, let $e:B\to\mathscr A^\circ$ be the zero section, and put
\[
\omega_A:=\det e^*\Omega^1_{\mathscr A^\circ/B},
\qquad
h_{\mathrm{Fal}}(A):=\frac1D\deg\omega_A.
\]
For $v\in M_K$, let $r_v$ be the toric rank of the special fiber and let $\Phi_v$ be the component group of the N\'eron model. Since the residue field is $\C$, we write
\[
\#\Phi_v:=\#\Phi_v(\C).
\]
Throughout Sections~3--6 we assume that $r_v\leq1$ at every place. Thus $r_v=0$ at a place of good reduction, while $r_v=1$ at every place in
\[
\Bad(A/K):=\{v\in M_K:A\text{ has bad reduction at }v\}.
\]
We use the two normalized boundary quantities
\[
S(A):=\frac1D\sum_{v\in\Bad(A/K)}\frac1{\#\Phi_v},
\qquad
h_\partial(A):=\frac1D\sum_{v\in\Bad(A/K)}\#\Phi_v.
\]

\subsection{Adelically normed vector spaces}
Let $\cF$ be a vector bundle on $B$, with generic fiber $F=\cF\otimes_{\cO_B}K$. Its model norm on $F_v:=F\otimes_KK_v$ is the gauge norm of the lattice $\cF\otimes_{\cO_B}\cO_v$:
\[
\|f\|_v:=\inf_{a\in K_v}\bigl\{|a|_v:f\in a(\cF\otimes_{\cO_B}\cO_v)\bigr\}.
\]

\begin{defn}
An \emph{adelically normed vector space} over $K$ is a finite-dimensional $K$-vector space $E$ together with non-archimedean norms $\|\cdot\|_v$ on $E_v:=E\otimes_KK_v$, one for every $v\in M_K$, such that the norms are induced by a single vector bundle on some nonempty Zariski open subset of $B$.
\end{defn}

For a nonzero $f\in E$, its normalized height is
\[
h_{\bar E}(f):=-\frac1D\sum_{v\in M_K}\log\|f\|_v.
\]
The sum is finite by coherence. If $0\neq F\subseteq E$ is a $K$-subspace, the restricted norms make $F$ an adelic subspace. For a nonzero $u\in\det F$, define
\[
\widehat{\deg}_{\mathrm n}(\bar F)
:=-\frac1D\sum_{v\in M_K}\log\|u\|_{\det F,v},
\qquad
\widehat\mu(\bar F):=\frac{\widehat{\deg}_{\mathrm n}(\bar F)}{\dim F}.
\]
These quantities do not depend on $u$, by the product formula. We write
\[
\widehat\mu_{\max}(\bar E)
:=\max_{0\neq F\subseteq E}\widehat\mu(\bar F).
\]
The effective sections are
\[
\widehat H^0(\bar E):=\{f\in E:\|f\|_v\leq1\text{ for every }v\}.
\]
For a model norm induced by a vector bundle $\cE$ on $B$, this is $H^0(B,\cE)$.

\subsection{Adelic line bundles and small sections}
Let $Z/K$ be a projective variety and let $\bar M=(M,(\|\cdot\|_{M,v})_v)$ be a continuously adelically metrized line bundle. For $s\in H^0(Z,M)$ put
\[
\|s\|_{\bar M,\sup,v}:=\sup_{z\in Z_v^{\an}}\|s(z)\|_{M,v},
\qquad
h_{\bar M}(s):=-\frac1D\sum_{v\in M_K}\log\|s\|_{\bar M,\sup,v}.
\]
An adelic metric is called \emph{semipositive} (or nef) if it is a uniform limit of metrics induced by relatively nef models. We use the adelic intersection products and arithmetic volume in the conventions of \cite[Section~2.5]{GaoGu_UBC}. If $d=\dim Z$, then
\[
\widehat\vol(\bar M)
:=\limsup_{m\to\infty}\frac{(d+1)!}{m^{d+1}}
\dim_\C\widehat H^0(Z,m\bar M).
\]
For nef $\bar M$, the arithmetic Hilbert--Samuel theorem gives
\begin{equation}\label{eq:arithmetic-HS}
\widehat\vol(\bar M)=\bar M^{d+1};
\end{equation}
see \cite[Theorem~2.4]{GaoGu_UBC}. Combining this with the function-field Minkowski theorem \cite[Theorem~2.3]{GaoGu_UBC} gives the following consequence used below: if $M$ is ample and $\bar M$ is nef, then there are integers $m_j\to\infty$ and nonzero sections $s_j\in H^0(Z,m_jM)$ such that
\begin{equation}\label{eq:small-section-input}
\frac1{m_j}h_{m_j\bar M}(s_j)
\geq
\frac{\bar M^{d+1}}{D(d+1)M^d}+o(1)
\qquad(j\to\infty).
\end{equation}

For the symmetric ample line bundle $L$ on $A$, we always use its canonical semipositive adelic metric and write the resulting adelic line bundle as $\bar L$. It is characterized by the cubical relation
\[
[n]^*\bar L\simeq\bar L^{\otimes n^2}\qquad(n\geq2),
\]
and its associated height on $A(\bar K)$ is the N\'eron--Tate height. In particular, $h_{\bar L}(x)=0$ for every torsion point $x$.

If $\mathbf f=(f_v)_v$ is a coherent family of continuous local functions, with $f_v=0$ for all but finitely many $v$, we write $\bar L(\epsilon\mathbf f)$ for the metric obtained from $\bar L$ by
\[
\|\cdot\|_{L(\epsilon\mathbf f),v}
=e^{-\epsilon f_v}\|\cdot\|_{L,v}.
\]
\endgroup

\section{Bump function on a one-dimensional skeleton}

\begingroup
The rank-one improvement begins with a small variational problem. We use periodic coordinates $x\in\R/\Z$.

\begin{lemma}\label{lem:sharp-bump}
For every $\eta>0$ there is a smooth, nonnegative, one-periodic function $\psi_\eta:\R\to\R$ such that
\begin{equation}\label{eq:bump-properties}
\psi_\eta(0)=0,
\qquad
|\psi_\eta''|\leq1,
\qquad
\int_0^1\psi_\eta(x)\,dx\geq\frac1{32}-\eta.
\end{equation}
\end{lemma}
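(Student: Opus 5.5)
Construct a non-smooth extremal candidate with mean exactly $1/32$, then smooth it by convolution at a cost of at most $\eta$.

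\textbf{The candidate.} Take the $1$-periodic function $\psi$ defined on $[0,1]$ by
\[
\psi(x)=\begin{cases}
\tfrac12x^2, & 0\le x\le \tfrac14,\\[2pt]
\tfrac1{16}-\tfrac12\bigl(x-\tfrac12\bigr)^2, & \tfrac14\le x\le \tfrac34,\\[2pt]
\tfrac12(1-x)^2, & \tfrac34\le x\le 1.
\end{cases}
\]
It has the following properties.
\begin{itemize}
\item The pieces match in value and first derivative at $x=\tfrac14$ and $x=\tfrac34$. Both sides give value $\tfrac1{32}$, with slope $\tfrac14$ at $x=\tfrac14$ and slope $-\tfrac14$ at $x=\tfrac34$.
\item At $x=0\equiv1$ the value and slope vanish from both sides.
\item So $\psi$ is $C^{1,1}$, periodic, nonnegative, has $\psi(0)=0$, and $|\psi''|\le1$ almost everywhere.
\end{itemize}
Its integral is
\[
2\int_0^{1/4}\tfrac12x^2\,dx+\int_{1/4}^{3/4}\Bigl(\tfrac1{16}-\tfrac12(x-\tfrac12)^2\Bigr)dx
=\tfrac1{192}+\tfrac1{32}-\tfrac1{192}=\tfrac1{32}.
\]
This is optimal, although optimality is not needed. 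The comparison $\psi(x)\le \tfrac12\min(x,1-x)^2$-type bounds, coming from $\psi(0)=\psi'(0)=0$ (since $0$ is a minimum) and from $\psi''\ge-1$ near the maximum, force this shape.

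\textbf{Smoothing.} Let $\rho_\delta$ be a standard nonnegative mollifier with integral $1$, supported in $[-\delta,\delta]$. Set
\[
\tilde\psi=\psi*\rho_\delta-(\psi*\rho_\delta)(0).
\]
\begin{itemize}
\item $\tilde\psi$ is smooth and $1$-periodic.
\item Since $\tilde\psi''=\psi''*\rho_\delta$ (the distributional second derivative of a $C^{1,1}$ function is its a.e.\ derivative), we get $|\tilde\psi''|\le1$.
\item $\tilde\psi(0)=0$.
\end{itemize}

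\textbf{Nonnegativity.} This is the only point needing care. The subtracted constant $(\psi*\rho_\delta)(0)$ is at most $\delta^2/2$, since $\psi\le\tfrac12\delta^2$ on $[-\delta,\delta]$. This is not automatically below $\psi*\rho_\delta$ everywhere, so I will restore nonnegativity by rescaling instead. Take
\[
\psi_\eta:=(1-\epsilon)\,\psi\circ\text{(rescaled)}
\]
replaced by the following cleaner construction. Smooth the second derivative directly:
\begin{enumerate}
\item Let $g=\psi''$, which equals $1$ on $(-\tfrac14,\tfrac14)$ mod $1$ and $-1$ on $(\tfrac14,\tfrac34)$. It has mean zero.
\item Let $g_\delta=g*\rho_\delta$ with $\rho_\delta$ even. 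Then $g_\delta$ is smooth, even, of mean zero, and $|g_\delta|\le1$.
\item Let $\psi_\delta$ be the unique periodic function with $\psi_\delta''=g_\delta$ and $\psi_\delta(0)=0$. By evenness, $\psi_\delta'(0)=0$.
\item For small $\delta$ we have $g_\delta=1$ on $[-\tfrac14+\delta,\tfrac14-\delta]$. So near $0$, $\psi_\delta=\tfrac12x^2\ge0$.
\item Away from $0$, $\psi_\delta\to\psi$ uniformly as $\delta\to0$, since $\psi_\delta=\psi*\rho_\delta-\text{const}$ and the constant tends to $0$. Since $\psi\ge c>0$ on $[\tfrac18,\tfrac78]$, $\psi_\delta$ is positive there for small $\delta$.
\end{enumerate}
Hence $\psi_\delta\ge0$ everywhere.

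\textbf{Conclusion.} Uniform convergence gives $\int_0^1\psi_\delta\to\tfrac1{32}$. Choosing $\delta$ small enough yields $\int_0^1\psi_\delta\ge\tfrac1{32}-\eta$, so $\psi_\eta:=\psi_\delta$ satisfies \eqref{eq:bump-properties}.
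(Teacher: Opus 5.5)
Your proof is correct and follows the paper's route: the same $C^{1,1}$ profile with $\psi''=\pm1$ and mean $\frac1{32}$, mollified by an even kernel, with uniform convergence controlling the integral. The only difference is how you secure nonnegativity and vanishing at $0$. The paper translates a minimizer of the mollified function to $0$ and subtracts the minimum, which gives both properties at once. You instead subtract the value at $0$ and check directly that $0$ is the global minimum: the function is exactly $\frac12x^2$ near $0$ by evenness, and positive elsewhere by uniform convergence. Your $\psi_\delta$ is literally your first $\tilde\psi$, so your initial worry was unfounded, and the abandoned rescaling sentence can simply be deleted.
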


\begin{proof}
Consider the one-periodic $C^{1,1}$ function whose restriction to $[0,1]$ is
\begin{equation}\label{eq:bang-bang}
\psi_*(x)=
\begin{cases}
\frac12x^2,&0\leq x\leq\frac14,\\[2mm]
\frac{-8x^2+8x-1}{16},&\frac14\leq x\leq\frac34,\\[2mm]
\frac12(1-x)^2,&\frac34\leq x\leq1.
\end{cases}
\end{equation}
It is nonnegative, $\psi_*(0)=0$, and its second derivative is $+1,-1,+1$ on the three indicated intervals. Direct integration gives
\[
\int_0^1\psi_*(x)\,dx=\frac1{32}.
\]
Let $\widetilde\psi_\delta$ be its periodic convolution with a sufficiently narrow nonnegative even mollifier. Then $\widetilde\psi_\delta$ is smooth and one-periodic, and $|\widetilde\psi_\delta''|\leq1$. Choose a minimizer $x_\delta$ and set
\[
\psi_\eta(x):=\widetilde\psi_\delta(x+x_\delta)-\min\widetilde\psi_\delta.
\]
Thus $\psi_\eta$ is smooth, nonnegative, one-periodic, and satisfies $\psi_\eta(0)=0$ and $|\psi_\eta''|\leq1$. Since $\widetilde\psi_\delta\to\psi_*$ uniformly and $\min\widetilde\psi_\delta\to0$, choosing $\delta$ sufficiently small gives the required lower bound for the integral.
\end{proof}

We now place this function on the rank-one skeleton at a bad place $v$. Raynaud uniformization gives
\[
\Sigma_v=X^*_{v,\R}/Y_v.
\]
Choose a generator of the rank-one lattice $X_v^*$, thereby identifying $X_v^*$ with $\Z$ and $Y_v$ with $n_v\Z$, where
\[
n_v=[X_v^*:Y_v]=\#\Phi_v.
\]
Let $b_v>0$ be the coefficient of the polarization quadratic form in this coordinate. The determinant identity for the polarized Raynaud extension gives
\begin{equation}\label{eq:bv}
b_v=\frac{d_v}{\#\Phi_v},
\qquad
d_v=[X_v:\lambda_{T,L}^*(Y_v)]\geq1;
\end{equation}
in particular, $b_v\geq1/\#\Phi_v$. We use the tropicalization, curvature, component-group, and canonical-measure descriptions from \cite[Sections~4.2--5.1]{GaoGu_UBC}.

\begin{prop}\label{prop:local}
For every $\eta>0$ there is a smooth function $f_{v,\eta}$ on $\Sigma_v$ such that:
\begin{enumerate}[label=(\roman*)]
\item $f_{v,\eta}$ vanishes on $\Phi_v=X_v^*/Y_v$ and is invariant under translation by $\Phi_v$;
\item multiplying the canonical metric of $L$ by $e^{-a f_{v,\eta}\circ\operatorname{trop}_v}$ gives a relatively semipositive metric for every $a\in[-1,1]$;
\item if $\alpha_v=c_1(L,\|\cdot\|_{L,v})$ and $\mu_v=\deg_L(A)^{-1}\alpha_v^g$, then
\begin{equation}\label{eq:local-mean}
I_{v,\eta}:=\int_{A_v^{\an}}
(f_{v,\eta}\circ\operatorname{trop}_v)\,d\mu_v
\geq\frac{1/32-\eta}{\#\Phi_v};
\end{equation}
\item with $\beta_{v,\eta}=d'd''(f_{v,\eta}\circ\operatorname{trop}_v)$, one has $\beta_{v,\eta}^2=0$.
\end{enumerate}
\end{prop}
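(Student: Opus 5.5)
The plan is to transplant the one-periodic function $\psi_\eta$ of Lemma~\ref{lem:sharp-bump} onto the circle $\Sigma_v=X^*_{v,\R}/Y_v\cong\R/n_v\Z$, rescaled so that it has period $1$ in the coordinate of $X_v^*\cong\Z$. Concretely, with $x$ the coordinate on $X^*_{v,\R}\cong\R$, I would set
\[
f_{v,\eta}(x):=c_v\,\psi_\eta(x),
\]
where the constant $c_v>0$ is fixed below by the curvature constraint. Since $\psi_\eta$ is $1$-periodic, $f_{v,\eta}$ descends to $\R/n_v\Z$ and is invariant under translation by $\Z/n_v\Z=\Phi_v$. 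Since $\psi_\eta(0)=0$, it then vanishes at every integer point, i.e.\ on $\Phi_v$. This gives (i). Item (iv) is automatic: $f_{v,\eta}\circ\operatorname{trop}_v$ factors through a one-dimensional tropicalization, so $\beta_{v,\eta}$ is a $(1,1)$-form pulled back from a real one-dimensional space. Its square is the pullback of a $(2,2)$-form on a $1$-dimensional space, hence zero (cf.\ the description of $d'd''$ of tropical functions in \cite[Sections~4.2--5.1]{GaoGu_UBC}).

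For (ii), I would use the curvature description from \cite{GaoGu_UBC}. On the skeleton, the canonical metric of $L$ corresponds, in the tropical chart, to the quadratic function $\tfrac12 b_v x^2$ up to an affine correction, so its curvature form is $b_v$ times the standard positive $(1,1)$-form in the $x$-direction. Multiplying the metric by $e^{-af}$ replaces the local potential by $\tfrac12 b_vx^2+af(x)$. Relative semipositivity therefore amounts to the convexity condition $b_v+a f''\geq 0$ everywhere. Since $|\psi_\eta''|\le1$ and $|a|\le1$, this holds provided $c_v\le b_v$. By \eqref{eq:bv} we have $b_v\ge 1/\#\Phi_v$, so I would take $c_v:=1/\#\Phi_v$. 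The metric obtained is smooth and psh on the tropical chart, and hence semipositive in the sense used in the paper, being a uniform limit of model metrics as in \cite{GaoGu_UBC}.

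For (iii), I would use the canonical-measure description. When $r_v=1$, the measure $\mu_v$ is supported on the skeleton and equals the pushforward-compatible Haar (Lebesgue) probability measure on $\Sigma_v$. This is the Gubler/Chambert-Loir description, and it is available in \cite[Section~5]{GaoGu_UBC}. Then
\[
I_{v,\eta}=\frac1{n_v}\int_0^{n_v} c_v\psi_\eta(x)\,dx=c_v\int_0^1\psi_\eta\geq\frac{1/32-\eta}{\#\Phi_v}
\]
by periodicity and \eqref{eq:bump-properties}.

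The main obstacle is step (ii): justifying precisely that relative semipositivity of the perturbed metric reduces to the one-variable convexity inequality $b_v+af''\ge0$ with the correct normalization of $b_v$. This requires matching the coordinate normalization in \eqref{eq:bv} with the tropical curvature formula of \cite{GaoGu_UBC}, and checking that no extra factor (for instance $2$ or $n_v$) appears. I would settle this first by computing directly in the one-dimensional Raynaud chart.
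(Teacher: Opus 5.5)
Your proposal is correct and follows the paper's proof. The paper also transplants $\psi_\eta$ to $\Sigma_v\cong\R/n_v\Z$ as a one-periodic function, and it obtains (i), (ii), (iii) and (iv) by the same means you use. Those means are, respectively: periodicity together with $\psi_\eta(0)=0$; comparing $d'd''$ of the perturbation with the canonical curvature $b_v\,d'x\wedge d''x$; the Haar pushforward of $\mu_v$; and the fact that $\beta_{v,\eta}$ lives in a single tropical direction.

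The only difference is cosmetic. The paper takes the constant to be $b_v$ and uses $b_v\geq 1/\#\Phi_v$ in (iii), whereas you take $c_v=1/\#\Phi_v$ and use the same inequality in (ii). Both choices work. Your potential $\tfrac12 b_vx^2$ matches the paper's curvature $b_v\,d'x\wedge d''x$, so the extra normalization factor you worried about does not arise.
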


\begin{proof}
In the coordinate above, put
\[
F_{v,\eta}(x):=b_v\psi_\eta(x)\qquad(x\in\R).
\]
The function is one-periodic, hence also $n_v$-periodic, so it descends to $\Sigma_v=\R/n_v\Z$. It vanishes at every integral point and is invariant under integral translation; this proves (i).

The tropical part of the canonical first Chern form in this rank-one direction is
$b_v\,d'x\wedge d''x$, 
whereas
\[
d'd''F_{v,\eta}=b_v\psi_\eta''(x)d'x\wedge d''x.
\]
Thus $|\psi_\eta''|\leq1$ implies (ii). The canonical measure pushes forward under $\operatorname{trop}_v$ to Haar probability measure on $\Sigma_v$. Hence, using one-periodicity,
\[
I_{v,\eta}
=\frac1{n_v}\int_0^{n_v}b_v\psi_\eta(x)\,dx
=b_v\int_0^1\psi_\eta(x)\,dx
\geq\frac{1/32-\eta}{\#\Phi_v},
\]
where the last inequality follows from \eqref{eq:bv}. Finally, $\beta_{v,\eta}$ is a $(1,1)$-form supported in a single tropical direction, so its square is zero. This proves (iv).
\end{proof}
\endgroup

\section{Existence of small sections}

\begingroup
Let $\bar L$ be the canonical adelic line bundle introduced in Section~2. The relation $[n]^*\bar L\simeq\bar L^{\otimes n^2}$ and the projection formula give
\[
n^{2g}\bar L^{g+1}=([n]^*\bar L)^{g+1}=n^{2g+2}\bar L^{g+1}
\qquad(n\geq2),
\]
so
\begin{equation}\label{eq:canonical-self-intersection}
\bar L^{g+1}=0.
\end{equation}

Fix $\eta>0$, abbreviate $f_v=f_{v,\eta}$ and $I_v=I_{v,\eta}$ for $v\in\Bad(A/K)$, and put $f_v=0$ at the places of good reduction. Let $\mathbf f=(f_v)_{v\in M_K}$. For $0<\epsilon\leq1$, let $\bar L(\epsilon\mathbf f)$ be the adelic line bundle whose norm at a bad place is
\[
\|\cdot\|_{L(\epsilon\mathbf f),v}
=e^{-\epsilon f_v\circ\operatorname{trop}_v}\|\cdot\|_{L,v}.
\]
By Proposition~\ref{prop:local}(ii), this adelic metric is semipositive. Write $\overline\cO(\epsilon\mathbf f)$ for the corresponding metrized trivial line bundle. Using \eqref{eq:canonical-self-intersection}, multilinearity of adelic intersections gives
\begin{equation}\label{eq:intersection-expansion}
\bar L(\epsilon\mathbf f)^{g+1}
=
\sum_{k=0}^{g}\binom{g+1}{k+1}
\bar L^{g-k}\cdot\overline\cO(\epsilon\mathbf f)^{k+1}.
\end{equation}

Put
\[
\alpha_v=c_1(L,\|\cdot\|_{L,v}),
\qquad
\beta_v=d'd''(f_v\circ\operatorname{trop}_v).
\]
By Proposition~\ref{prop:local}(iv), $\beta_v^2=0$. Consequently every term in \eqref{eq:intersection-expansion} with $k\geq2$ vanishes, and the remaining terms give
\begin{equation}\label{eq:intersection-two-terms}
\bar L(\epsilon\mathbf f)^{g+1}
=(g+1)\epsilon\sum_{v\in\Bad(A/K)}
\left(
\int_{A_v^{\an}}f_v\alpha_v^g
+\frac{g\epsilon}{2}\int_{A_v^{\an}}f_v\alpha_v^{g-1}\beta_v
\right).
\end{equation}
The curvature estimate in Proposition~\ref{prop:local}(ii) is equivalent to $-\alpha_v\leq\beta_v\leq\alpha_v$ in the rank-one tropical direction. Since $f_v\geq0$,
\[
\int_{A_v^{\an}}f_v\alpha_v^{g-1}\beta_v
\geq
-\int_{A_v^{\an}}f_v\alpha_v^g.
\]
Moreover, by the definition of $I_v$,
\[
\int_{A_v^{\an}}f_v\alpha_v^g=\deg_L(A)I_v.
\]
Therefore
\begin{equation}\label{eq:positive-intersection}
\bar L(\epsilon\mathbf f)^{g+1}
\geq
\epsilon(g+1)\left(1-\frac{g\epsilon}{2}\right)
\deg_L(A)\sum_{v\in\Bad(A/K)}I_v.
\end{equation}

Apply the small-section consequence \eqref{eq:small-section-input} to the nef adelic line bundle $\bar L(\epsilon\mathbf f)$. There are integers $m_j\to\infty$ and nonzero sections $s_j\in H^0(A,m_jL)$ such that
\[
\frac1{m_j}h_{m_j\bar L(\epsilon\mathbf f)}(s_j)
\geq
\frac{\bar L(\epsilon\mathbf f)^{g+1}}
{D(g+1)\deg_L(A)}+o(1).
\]
Taking $\epsilon=1/g$ in \eqref{eq:positive-intersection} proves the following.

\begin{prop}\label{prop:big}
Fix $\epsilon=1/g$. There are integers $m_j\to\infty$ and nonzero sections $s_j\in H^0(A,m_jL)$ such that
\begin{equation}\label{eq:small-sections-final}
\frac1{m_j}h_{m_j\bar L(\epsilon\mathbf f)}(s_j)
\geq
\frac1{2gD}\sum_{v\in\Bad(A/K)}I_v+o(1)
\qquad(j\to\infty).
\end{equation}
\end{prop}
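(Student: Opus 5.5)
The plan is to combine the small-section consequence \eqref{eq:small-section-input} with the lower bound \eqref{eq:positive-intersection} on the self-intersection, evaluated at $\epsilon=1/g$. Almost all the work has already been done in the discussion preceding the statement; what remains is to justify applying \eqref{eq:small-section-input} and to carry out the arithmetic.

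\emph{Step 1: applicability.} By Proposition~\ref{prop:local}(ii), each local modified metric $e^{-\epsilon f_v\circ\operatorname{trop}_v}\|\cdot\|_{L,v}$ is relatively semipositive, since $\epsilon=1/g\in(0,1]\subseteq[-1,1]$. At good places $f_v=0$ and the metric is the canonical one. Only finitely many places are modified, so $\bar L(\epsilon\mathbf f)$ is a nef adelic line bundle. Its underlying line bundle $L$ is ample, with $\dim A=g$. Hence \eqref{eq:small-section-input} applies with $d=g$ and $M^d=\deg_L(A)$. It produces $m_j\to\infty$ and nonzero $s_j\in H^0(A,m_jL)$ satisfying
\[
\frac1{m_j}h_{m_j\bar L(\epsilon\mathbf f)}(s_j)\geq\frac{\bar L(\epsilon\mathbf f)^{g+1}}{D(g+1)\deg_L(A)}+o(1).
\]

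\emph{Step 2: plug in the intersection bound.} Substituting $\epsilon=1/g$ into \eqref{eq:positive-intersection}, the factor becomes
\[
\epsilon(g+1)\Bigl(1-\frac{g\epsilon}{2}\Bigr)=\frac{g+1}{g}\cdot\frac12=\frac{g+1}{2g}.
\]
Dividing by $D(g+1)\deg_L(A)$, the factors $(g+1)$ and $\deg_L(A)$ cancel. This leaves exactly $\frac{1}{2gD}\sum_{v\in\Bad(A/K)}I_v$, which is \eqref{eq:small-sections-final}.

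\emph{Main obstacle.} The only delicate point is the validity of \eqref{eq:positive-intersection} itself, which is already established above. It rests on two facts:
\begin{itemize}
\item the vanishing of the terms with $k\geq2$ in \eqref{eq:intersection-expansion}, via $\beta_v^2=0$;
\item the sign estimate $\int f_v\alpha_v^{g-1}\beta_v\geq-\int f_v\alpha_v^g$, via $-\alpha_v\leq\beta_v\leq\alpha_v$ and $f_v\geq0$.
\end{itemize}
I would double-check one further point: the $k=1$ term in the expansion involves $\overline\cO(\epsilon\mathbf f)^2$, and this should reduce to $\int f_v\alpha_v^{g-1}\beta_v$ with coefficient $\binom{g+1}{2}\epsilon^2$, matching the factor $(g+1)\epsilon\cdot\frac{g\epsilon}{2}$. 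Granting that, the proposition is immediate.

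A side remark: the choice $\epsilon=1/g$ maximizes $\epsilon(1-g\epsilon/2)$ over $(0,1]$, which is why it yields the constant $1/(2g)$.
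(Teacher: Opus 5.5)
Your proposal is correct and follows the paper's argument: apply the small-section input \eqref{eq:small-section-input} to the nef adelic line bundle $\bar L(\epsilon\mathbf f)$, then substitute $\epsilon=1/g$ into \eqref{eq:positive-intersection}, where the factor $\epsilon(g+1)(1-g\epsilon/2)=(g+1)/(2g)$ cancels against $D(g+1)\deg_L(A)$ to give $\frac{1}{2gD}\sum_{v}I_v$. Your check that the $k=1$ coefficient $\binom{g+1}{2}\epsilon^2$ equals $(g+1)\epsilon\cdot\frac{g\epsilon}{2}$ is consistent with \eqref{eq:intersection-two-terms}.
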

\endgroup

\section{Multiplicity estimate and the global Liouville inequality}\label{sec:Liouville}

\begingroup
We now combine Proposition~\ref{prop:big} with the multiplicity estimate and the slope bound from \cite{GaoGu_UBC}.

\begin{thm}\label{thm:local-global}
Assume $\Bad(A/K)\neq\varnothing$. For every finite subgroup $\Gamma\subset A(K)_{\tor}$ there is a proper abelian subvariety $A'\subsetneq A$, defined over $K$, such that
\begin{equation}\label{eq:local-global-minimand}
\left(
\#\left(\frac{\Gamma+A'}{A'}\right)
\frac{h^0(A',L|_{A'})}{h^0(A,L)}
\right)^{\!1/(g-\dim A')}
\leq
32g^3(g+2)\frac{h_{\mathrm{Fal}}(A)}{S(A)}.
\end{equation}
\end{thm}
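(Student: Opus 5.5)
The plan is to follow the Gao--Gu Diophantine-approximation scheme. We compare the small sections of Proposition~\ref{prop:big} with a multiplicity estimate at the torsion points of $\Gamma$, and convert the resulting lower bound on heights into the displayed inequality \eqref{eq:local-global-minimand} via the slope bound for $H^0(A,mL)$.

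\emph{Step 1: the upper bound.} Let $s_j\in H^0(A,m_jL)$ be the sections of Proposition~\ref{prop:big} with $\epsilon=1/g$. Equip $E_j:=H^0(A,m_jL)$ with the sup-norms of $m_j\bar L$. Since $f_v\geq0$, we have $\|s\|_{m_j\bar L(\epsilon\mathbf f),\sup,v}\leq\|s\|_{m_j\bar L,\sup,v}$. By Gao--Gu's slope estimate for the canonical model of $H^0(A,mL)$, one gets
\[
\widehat\mu_{\max}(\bar E_j)\leq \tfrac{m_j}{2}\,h_{\mathrm{Fal}}(A)+o(m_j)
\]
(up to the precise constant they prove), with $D$ normalizations as above. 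The height of $s_j$ with respect to the twisted metric therefore exceeds what $\bar E_j$ allows, by the margin $\frac{m_j}{2gD}\sum I_v$. Lower-bounding each $I_v$ via \eqref{eq:local-mean} gives this margin as at least $\frac{m_j}{64g}S(A)$.

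\emph{Step 2: the multiplicity estimate.} At each bad place $v$, the function $f_v\circ\operatorname{trop}_v$ vanishes on the reduction of every torsion point. Indeed, by Proposition~\ref{prop:local}(i) it vanishes on $\Phi_v$, and torsion points of $A(K)$ tropicalize into $\Phi_v$ because $\Gamma\subset A(K)$ is $K$-rational. Consequently the twisted sup-norm gain must be realised away from $\Gamma$. Concretely, I would consider the translates $\gamma^*s_j$ for $\gamma\in\Gamma$; they have the same twisted height by the $\Phi_v$-invariance in (i). I would then form the subspace $F_j\subseteq E_j$ that these translates span. Its normalized degree is at least the twisted height, which is large, minus a correction. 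Comparing with $\widehat\mu_{\max}$ forces $\dim F_j$ to be small relative to $h^0(A,m_jL)$.

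\emph{Step 3: Philippon-type zero estimate.} Vanishing considerations for the translates bound $\dim F_j$ from below. This is where I expect the main obstacle: translating ``the orbit $\Gamma$ cannot be separated by few sections'' into the quantity $\#((\Gamma+A')/A')\,h^0(A',L|_{A'})/h^0(A,L)$. For this I would invoke the multiplicity/zero estimate of \cite{GaoGu_UBC} directly. It produces an obstructing abelian subvariety $A'\subsetneq A$ with
\[
\#\bigl((\Gamma+A')/A'\bigr)\,\deg_L(A')\,m^{\dim A'}\lesssim c^{\,g-\dim A'}\,m^{g}\cdots.
\]
By Riemann--Roch, $h^0=\deg/\dim!$, so this rewrites in terms of $h^0$. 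Taking $(g-\dim A')$-th roots, we get a factor $m$-free ratio bounded by (slope ceiling)/(height margin). That ratio is $\bigl(\tfrac{1}{2}gh_{\mathrm{Fal}}\bigr)\big/\bigl(\tfrac{1}{64g}S\bigr)$ times the zero-estimate constant $(g+2)$ and a factor $g$ from the Dirichlet-lemma exponent. The bookkeeping of constants that yields exactly $32g^3(g+2)$ should come out of this final division, after letting $\eta\to0$ and $j\to\infty$. Finiteness of the set of candidate $A'$ lets us pick one $A'$ that works for a subsequence.
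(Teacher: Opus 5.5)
There is a genuine gap: the proposal never supplies the inequality that ties $N_\Gamma$ to heights, and the comparisons you do set up cannot produce one. Since $f_v\geq0$, the twisted norms satisfy $\|\cdot\|_{L(\epsilon\mathbf f),v}\leq\|\cdot\|_{L,v}$. Hence every twisted height, degree or slope is \emph{at least} its untwisted counterpart. A large lower bound for $h_{m_j\bar L(\epsilon\mathbf f)}(s_j)$, or for the twisted degree of your span $F_j$, can therefore never contradict an upper bound on $\widehat\mu_{\max}$ of $H^0(A,m_jL)$ with untwisted sup-norms, so there is no ``margin''. Two further problems: $t_\gamma^*s_j$ is a section of $t_\gamma^*(m_jL)$, which equals $m_jL$ only when $m_j\phi_L(\gamma)=0$ (with $\phi_L$ the polarization isogeny), and the ceiling $\frac{m_j}2h_{\mathrm{Fal}}(A)$ on $H^0(A,m_jL)$ plays no role in the argument. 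Step~3 then assumes a zero estimate and splits the constant $32g^3(g+2)$ into factors that are not derived from anything.

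The missing idea is to use the twist only through evaluation at a single $K$-rational torsion point, via a jet.

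First, since $m_j>N_\Gamma/g$ for large $j$, Lemma~\ref{lem:zero} gives $x_j\in\bigcup_{i\leq g}\Gamma[i]$ with $\ell_j=\operatorname{ord}_{x_j}(s_j)\leq g(gm_j/N_\Gamma+1)$. This is where the quantity in \eqref{eq:local-global-minimand} enters, through the minimum \eqref{eq:def-NGamma}.

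Second, the nonzero jet $\operatorname{jet}^{\ell_j}s_j(x_j)\in\Sym^{\ell_j}(\Lie A)^\vee\otimes L_{x_j}^{\otimes m_j}$ satisfies $\|\operatorname{jet}^{\ell_j}s_j(x_j)\|_v\leq\|s_j\|_{m_j\bar L(\epsilon\mathbf f),\sup,v}$ by Lemma~\ref{lem:local-jet}. This holds precisely because $K_v$-points tropicalize into $\Phi_v$, where $f_v=0$. You noticed this vanishing but did not use it here. It gives a lower bound of $\frac{m_j}{2gD}\sum_{v\in\Bad(A/K)}I_v+o(m_j)$ for the height of the jet.

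Third, the height of a nonzero vector is at most the maximal slope of its space. Here $\widehat\mu_{\max}=m_jh_{\bar L}(x_j)+\ell_j\widehat\mu_{\max}(\overline{\Lie A}^{\,\vee})\leq\ell_j(\frac g2+1)h_{\mathrm{Fal}}(A)$, because $h_{\bar L}(x_j)=0$ for torsion $x_j$ (Lemma~\ref{lem:liouville}).

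Dividing by $m_j$ and letting $j\to\infty$, then $\eta\to0$, gives $N_\Gamma\leq32g^3(g+2)h_{\mathrm{Fal}}(A)/S(A)$. The constant comes from four sources: $2g$ from $\epsilon=1/g$, $g^2$ from the multiplicity bound, $\frac{g+2}2$ from the slope of $\overline{\Lie A}^{\,\vee}$, and $32$ from the bump.

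Working directly with the minimum $N_\Gamma$ also removes your subsequence argument. Its premise is false in any case: $A$ can have infinitely many abelian subvarieties, e.g.\ the graphs $\{(x,nx)\}\subset E\times E$ for an elliptic curve $E/K$.
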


Define
\begin{equation}\label{eq:def-NGamma}
N_\Gamma
:=
\min_{A'\subsetneq A}
\left(
\#\left(\frac{\Gamma+A'}{A'}\right)
\frac{h^0(A',L|_{A'})}{h^0(A,L)}
\right)^{\!1/(g-\dim A')},
\end{equation}
where $A'$ ranges over the proper $K$-abelian subvarieties of $A$ and where $h^0(0,L|_0)=1$. By \cite[Lemma~6.1]{GR25}, allowing abelian subvarieties only after base change to $\bar K$ gives the same minimum. Thus Theorem~\ref{thm:local-global} is equivalent to
\begin{equation}\label{eq:NGamma-bound}
N_\Gamma\leq32g^3(g+2)\frac{h_{\mathrm{Fal}}(A)}{S(A)}.
\end{equation}

\begin{lemma}[Multiplicity estimate]\label{lem:zero}
Let $m>N_\Gamma/g$ and let $0\neq s\in H^0(A,mL)$. Put
\[
\Gamma[0]:=\{0\},
\qquad
\Gamma[i]:=\{x_1+\cdots+x_i:x_j\in\Gamma\}\quad(i\geq1).
\]
Then there is a point
\[
x\in\bigcup_{i=0}^g\Gamma[i]
\]
such that, with $\ell=\operatorname{ord}_x(s)$,
\begin{equation}\label{eq:multiplicity}
\ell\leq g\left(\frac{gm}{N_\Gamma}+1\right).
\end{equation}
\end{lemma}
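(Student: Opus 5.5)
This is a Philippon-type zero estimate on the abelian variety $A$, argued by contradiction. Suppose that at every point $x\in\Gamma[i]$, $0\le i\le g$, the section $s$ vanishes to order greater than
\[
\ell_0:=g\Bigl(\frac{gm}{N_\Gamma}+1\Bigr).
\]
Then $s$ vanishes to order at least $T:=\lfloor\ell_0\rfloor+1$ at every point of $\Gamma[g]$. We derive a contradiction by producing a proper abelian subvariety $A'$ whose contribution to $N_\Gamma$ in \eqref{eq:def-NGamma} is smaller than $N_\Gamma$.

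The main tool is the zero lemma of Philippon (in the Masser--W\"ustholz form, as used in \cite{GaoGu_UBC}) for the algebraic group $A$ with its translation-invariant tangent structure. It says the following. Let $s\in H^0(A,mL)$ be nonzero and vanish to order at least $gT'$ along $\Gamma[g]$, with $T'\ge1$. Then there is a connected algebraic subgroup $A'\subsetneq A$ (an abelian subvariety, possibly $0$) such that
\[
\binom{T'+r-1}{r}\,\#\Bigl(\frac{\Gamma+A'}{A'}\Bigr)\,\deg_L(A')\le \deg_L(A)\, m^{r},
\qquad r=g-\dim A'.
\]
To set it up, I take the index set $\Sigma=\Gamma$, so that $\Sigma(g)=\Gamma[g]$. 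The only derivatives are those along $\operatorname{Lie}A$ itself, which is the full tangent space, so no transcendence-type $W$-space enters. The relevant multiplicity parameter is $T'=\lfloor T/g\rfloor$, which satisfies $T'> gm/N_\Gamma$.

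Next I convert degrees into $h^0$ by Riemann--Roch on abelian varieties, $h^0(A',L|_{A'})=\deg_L(A')/\dim(A')!$, together with the bound $\binom{T'+r-1}{r}\ge T'^r/r!$. The zero lemma then becomes
\[
\#\Bigl(\frac{\Gamma+A'}{A'}\Bigr)\frac{h^0(A',L|_{A'})}{h^0(A,L)}\le \frac{g!}{(g-r)!\,r!}\Bigl(\frac{m}{T'}\Bigr)^{r}\le \Bigl(\frac{gm}{T'}\Bigr)^{r}.
\]
The last step uses $\binom{g}{r}\le g^r$. Taking $r$-th roots and using the definition of $N_\Gamma$ gives $N_\Gamma\le gm/T'$, which contradicts $T'>gm/N_\Gamma$. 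The standing hypothesis $m>N_\Gamma/g$ is there to make $gm/N_\Gamma>1$. This guarantees $T'\ge1$, so the zero lemma applies nontrivially.

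The main obstacle is bookkeeping: we must check that the constants in the chosen version of Philippon's lemma match the exponent $g(gm/N_\Gamma+1)$ exactly. That is the floor losses between $T$ and $T'$, which account for the "$+1$" and the outer factor $g$, and the binomial-versus-Riemann--Roch comparison. If the normalization of the version in \cite{GaoGu_UBC} differs, the fallback is to quote their multiplicity lemma directly with $N_\Gamma$ as defined here. By \cite[Lemma~6.1]{GR25}, it is harmless that the zero lemma may produce $A'$ only over $\bar K$.
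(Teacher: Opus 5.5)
Your architecture is the right one: argue by contradiction, put $T'=\lfloor T/g\rfloor$ so that $T'>gm/N_\Gamma$, and convert degrees with $\deg_L(A')=(\dim A')!\,h^0(A',L|_{A'})$. The floors are harmless. The gap is in the conversion step, which is miscomputed. With $\deg_L$ the intersection degree, as you use it in Riemann--Roch, the zero lemma you quote reads $\binom{T'+r-1}{r}\,\#\bigl(\frac{\Gamma+A'}{A'}\bigr)(g-r)!\,h^0(A',L|_{A'})\le g!\,h^0(A,L)\,m^r$. The bound $\binom{T'+r-1}{r}\ge T'^{\,r}/r!$ then gives
\[
\#\Bigl(\frac{\Gamma+A'}{A'}\Bigr)\frac{h^0(A',L|_{A'})}{h^0(A,L)}\le\frac{g!\,r!}{(g-r)!}\Bigl(\frac{m}{T'}\Bigr)^{r},
\]
with $r!$ in the numerator, not the denominator. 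Now $\frac{g!\,r!}{(g-r)!}=\prod_{i=0}^{r-1}(i+1)(g-i)>g^r$ as soon as $2\le r\le g$ and $g\ge3$, and you have no control over the codimension $r$ of the subgroup the lemma produces. So you only get $N_\Gamma\le(r!)^{1/r}gm/T'$. This does not contradict $T'>gm/N_\Gamma$, because the contradiction hypothesis leaves no room for a constant factor. Thus a zero lemma with the Hilbert-function count $\binom{T'+r-1}{r}\approx T'^{\,r}/r!$, paired with intersection degrees, genuinely loses a factor $r!$. It proves only something like $\ell\le g\bigl((g!)^{1/g}gm/N_\Gamma+1\bigr)$, which would worsen the constant $32g^3(g+2)$ in Theorem~\ref{thm:local-global} and hence $\Bfun$. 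So the "main obstacle" is not floor bookkeeping.

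The missing ingredient is the sharper estimate the paper cites: Nakamaye's multiplicity estimate, proved by intersection theory. There each point of $(\Gamma+A')/A'$ contributes its local intersection multiplicity $T'^{\,r}$ rather than $\binom{T'+r-1}{r}$. Schematically, vanishing to order $\ge gT'$ on $\Gamma[g]$ forces $T'^{\,r}\,\#\bigl(\frac{\Gamma+A'}{A'}\bigr)\deg_L(A')\le m^r\deg_L(A)$ for some $A'\subsetneq A$. With that input your bookkeeping goes through verbatim, since $g!/(g-r)!\le g^r$; this is the reduction in \cite[Proposition~7.2]{GaoGu_UBC}. Nakamaye's statement is also intrinsic for an ample $L$. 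This matters because $L$ can be a principal polarization with $h^0(A,L)=1$ (as in the proof of Theorem~\ref{thm:surface}), which gives no projective embedding, so a polynomial zero lemma does not apply directly to sections of $mL$. Your fallback of quoting the Gao--Gu lemma is in effect the paper's proof, but then the Philippon computation contributes nothing.
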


\begin{proof}
This is Nakamaye's multiplicity estimate \cite{Nakamaye}, applied with the degree convention in \eqref{eq:def-NGamma}; see \cite[Proposition~7.2]{GaoGu_UBC} for the complete reduction to that result.
\end{proof}

Let $\overline{\Lie A}$ denote the adelically normed tangent space obtained from the Hodge bundle of the N\'eron model, and endow its dual and symmetric powers with the induced quotient/tensor norms. If $x\in A(K)$ and $\ell=\operatorname{ord}_x(s)$, the {\it $\ell$-th jet} of $s$ at $x$, denoted by $\mathrm{jet}^\ell s(x)$, is the image of $s$ under the composition
\[
H^0(A,\cI_x^\ell+mL) \rightarrow H^0(\mathrm{Spec}\bar{K}, x^*\!\!\left(\frac{\cI_x^\ell}{\cI_x^{\ell+1}}+mL\right) ) \rightarrow H^0(\mathrm{Spec}\bar{K}, x^*\mathrm{Sym}^\ell(\Omega_A +mL)).
\]
Let
\[
J:=\Sym^\ell(\Lie A)^\vee\otimes L_x^{\otimes m}.
\]
Then $\mathrm{jet}^\ell s(x)$ is an element in $J$, which is non-zero  by the definition of vanishing order. 

We endow $J$ with the tensor-product adelic norm. The following result comes from the Liouville inequality; see \cite[$\mathsection$7.4]{GaoGu_UBC}.

\begin{lemma}\label{lem:liouville}
If $x\in A(K)_{\tor}$ and $\operatorname{jet}_x^\ell(s)\neq0$, then
\begin{equation}\label{eq:global-liouville}
-\frac1D\sum_{v\in M_K}
\log\|\operatorname{jet}^\ell s(x)\|_v
\leq
\ell\left(\frac g2+1\right)h_{\mathrm{Fal}}(A).
\end{equation}
\end{lemma}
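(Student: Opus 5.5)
The plan is to bound the jet place by place and then sum, following the Liouville argument of \cite[\S7.4]{GaoGu_UBC}. Since $x$ is a torsion point, its N\'eron--Tate height vanishes. Hence the canonical metric contributes no global height from the factor $L_x^{\otimes m}$: we have $\sum_v\log\|x^*s_0\|_{L,v}$-type terms equal to zero for any local generator. All the height on the left of \eqref{eq:global-liouville} must therefore come from the factor $\Sym^\ell(\Lie A)^\vee$.

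Concretely, I would fix a rational section $u$ of $L_x$ and write
\[
\mathrm{jet}^\ell s(x)=\tau\otimes u^{\otimes m},\qquad \tau\in\Sym^\ell(\Lie A)^\vee.
\]
Since the tensor-product norm is multiplicative on pure tensors, the left-hand side splits as
\[
-\tfrac1D\sum_v\log\|\tau\|_v \;+\; m\,h_{\bar L}(x).
\]
The second term is $0$ because $x$ is torsion.

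For the first term, I would use the slope inequality. The line $K\tau\subset \Sym^\ell(\Lie A)^\vee$ is a one-dimensional adelic subspace, so
\[
-\tfrac1D\sum_v\log\|\tau\|_v=\widehat\mu(\overline{K\tau})\leq \widehat\mu_{\max}\bigl(\Sym^\ell\overline{(\Lie A)^\vee}\bigr).
\]
It remains to bound this maximal slope. The dual of $\overline{\Lie A}$ is the Hodge bundle $e^*\Omega^1$, whose determinant has normalized degree $h_{\mathrm{Fal}}(A)$. Over a function field of characteristic zero, $\widehat\mu_{\max}$ is additive under tensor products, by semistability of tensor products of semistable bundles. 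Moreover, $\Sym^\ell$ is a quotient, in fact a direct summand, of the $\ell$-fold tensor power. Together these give
\[
\widehat\mu_{\max}\bigl(\Sym^\ell\overline{\Omega}\bigr)\le \ell\,\widehat\mu_{\max}(\overline\Omega).
\]

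The main obstacle is bounding $\widehat\mu_{\max}(\overline\Omega)$, the maximal slope of the Hodge bundle, by $(\tfrac g2+1)h_{\mathrm{Fal}}(A)$. A naive bound $\widehat\mu_{\max}\le\deg$ fails when some subbundle has negative degree relative to the others. Here I would invoke the slope bound of \cite{GaoGu_UBC}, which rests on the following inputs:
\begin{itemize}
\item The Hodge bundle is nef (Faltings, or Griffiths semipositivity). Hence every quotient has nonnegative degree, so every subbundle $F$ satisfies $\deg F\le\deg\omega_A=D\,h_{\mathrm{Fal}}(A)$.
\item Since $\dim F\ge1$, this gives $\widehat\mu(\bar F)\le h_{\mathrm{Fal}}(A)$.
\end{itemize}
This already gives a bound better than $(g/2+1)h_{\mathrm{Fal}}$. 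The extra factor $g/2+1$ in the stated inequality presumably accounts for the passage between the Hodge-bundle norm and the model norm at the places of bad reduction, and for the norm comparison between $\Sym^\ell$ and the tensor power. I would simply quote \cite[\S7.4]{GaoGu_UBC} for the precise constant. Combining the three displays then yields \eqref{eq:global-liouville}.
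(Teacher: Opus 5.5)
Your proof is correct and has the same skeleton as the paper's. Both use the slope inequality for the line spanned by the jet. Both kill the $L_x^{\otimes m}$ contribution because $x$ is torsion; you split the pure tensor directly, which is the paper's ``tensoring with an adelic line translates slopes''. Both reduce $\widehat\mu_{\max}(\Sym^\ell)\leq\ell\,\widehat\mu_{\max}$, and both finish with a bound on the maximal slope of the Hodge bundle.

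For the symmetric power, the paper quotes Gaudron's theorem, while you use semistability of tensor products in characteristic zero. That works, but only because $\Sym^\ell$ is a direct summand (a sub-object) of the $\ell$-th tensor power via the symmetrizer. Being a quotient alone would not bound $\widehat\mu_{\max}$ from above.

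The real difference is the last step. The paper imports $\widehat\mu_{\max}(\overline{\Lie A}^{\,\vee})\leq(\frac g2+1)h_{\mathrm{Fal}}(A)$ from \cite[Appendix~A]{GaoGu_UBC}. Your nefness argument instead uses semipositivity of the Hodge bundle of the N\'eron identity component in characteristic zero. It gives the sharper $\widehat\mu_{\max}(\overline{\Lie A}^{\,\vee})\leq h_{\mathrm{Fal}}(A)$, which already implies the lemma because $h_{\mathrm{Fal}}(A)\geq0$.

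So you need not hedge and fall back on the citation. Your guess that the factor $\frac g2+1$ pays for a norm comparison does not apply here, whether at bad places or between $\Sym^\ell$ and the tensor power. By definition $\overline{\Lie A}$ is normed by the N\'eron-model Hodge bundle. With residue field $\C$, the quotient and subspace norms on $\Sym^\ell$ coincide.

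What each route buys: yours is self-contained and sharper. Fed into the proof of Theorem~\ref{thm:local-global}, it would replace $32g^3(g+2)$ by $64g^3$. It does rest on Hodge-theoretic semipositivity, which is special to characteristic zero. The paper's route simply reuses a bound already established in \cite{GaoGu_UBC}.
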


\begin{proof}
The line $K\operatorname{jet}^\ell s(x)$ is an adelic subspace of $J$, hence
\begin{equation}\label{eq:jet-slope-line}
\widehat\mu\bigl(K\operatorname{jet}^\ell s(x)\bigr)
\leq\widehat\mu_{\max}(J).
\end{equation}
The left-hand side is the left-hand side of \eqref{eq:global-liouville}. Since tensoring with an adelic line translates every slope, \cite[Proposition~5.7(1)]{GaudronAdelicSpace} gives
\begin{equation}\label{eq:jet-max-slope}
\widehat\mu_{\max}(J)
=m h_{\bar L}(x)
+\widehat\mu_{\max}\bigl(\Sym^\ell(\overline{\Lie A}^{\,\vee})\bigr).
\end{equation}
The first term is zero because $x$ is torsion and $\bar L$ is canonical. In characteristic zero, \cite[Theorem~7.1(1)]{GaudronAdelicSpace} gives
\[
\widehat\mu_{\max}\bigl(\Sym^\ell(\overline{\Lie A}^{\,\vee})\bigr)
=\ell\widehat\mu_{\max}(\overline{\Lie A}^{\,\vee}),
\]
and \cite[Appendix~A]{GaoGu_UBC} proves
\[
\widehat\mu_{\max}(\overline{\Lie A}^{\,\vee})
\leq\left(\frac g2+1\right)h_{\mathrm{Fal}}(A).
\]
Combining these inequalities proves the lemma.
\end{proof}

\begin{lemma}\label{lem:local-jet}
For every $v\in M_K$,
\begin{equation}\label{eq:local-jet-bound}
\|\operatorname{jet}^\ell s(x)\|_v
\leq
\|s\|_{m\bar L(\epsilon\mathbf f),\sup,v}.
\end{equation}
\end{lemma}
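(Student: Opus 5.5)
The plan is to treat the good places and the bad places separately. At each bad place the whole difficulty sits at the point $x$ itself, where the perturbation will be seen to disappear.

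First, the inequality is local, so we work at a fixed place $v$. Since $x$ is torsion, it extends to a section of the N\'eron model. Because $A$ is semistable, a suitable multiple of it lands in the identity component; we handle the component issue through the point's image in $\Phi_v$. At $x$, the canonical metric on $L_x$ and the model norm on $\Lie A$ both come from the N\'eron model: $L_x$ is normed through the extension of $L$, and the tangent space through the Hodge bundle $e^*\Omega^1$ translated to $x$. The claim then reduces to a Cauchy-type inequality. If $s$ has sup-norm at most $1$ on $A_v^{\an}$ (after scaling by an element of $K_v$), then its $\ell$-th Taylor coefficient at $x$ is integral in $\Sym^\ell(\Lie A)^\vee\otimes L_x^{\otimes m}$.

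Second, we prove this at the bad places. We use the formal neighbourhood of $x$ in the N\'eron model, or equivalently the residue polydisc $\{y:\operatorname{red}(y)=\operatorname{red}(x)\}$. On the formal group with coordinates $z_1,\dots,z_g$ dual to an integral basis of $\Lie$, this polydisc is the open unit polydisc. We trivialize $mL$ there by a local generator whose canonical norm is identically $1$ on the polydisc. This uses that the canonical metric restricted to the formal fibre containing $x$ is a model metric, which one gets after translating by $x$ and pulling back via the identity component. Then $s=\sum a_I z^I$ with $|a_I|\leq\sup|s|$, which is the maximum principle for power series on the polydisc. The leading homogeneous part has degree $\ell$, and it is exactly $\operatorname{jet}^\ell s(x)$. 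This gives $\|\operatorname{jet}^\ell s(x)\|_v\leq\|s\|_{m\bar L,\sup,v}$ on the residue disc. When $v$ is a good place, $f_v=0$ and this argument is all that is needed.

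Third, we account for the perturbation. At $x$ we must compare $\|s\|_{m\bar L(\epsilon\mathbf f),\sup,v}$ with the sup of $\|s\|_{m\bar L}$ over the residue polydisc. The tropicalization of the residue polydisc around a torsion point is a single point of $\Sigma_v$, namely the image of $x$. Since $x$ is torsion, this image lies in $\Phi_v=X_v^*/Y_v$, provided $x$ lies in the part reducing onto the skeleton's lattice points; this is where the group structure enters. By Proposition~\ref{prop:local}(i), $f_v$ vanishes there. So on that polydisc the perturbed norm $e^{-m\epsilon f_v}\|\cdot\|_{mL,v}$ coincides with the canonical norm. Consequently
\[
\|\operatorname{jet}^\ell s(x)\|_v\leq\sup_{\text{polydisc}}\|s\|_{mL,v}=\sup_{\text{polydisc}}\|s\|_{mL(\epsilon\mathbf f),v}\leq\|s\|_{m\bar L(\epsilon\mathbf f),\sup,v}.
\]

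The main obstacle is this third step. We must check carefully that the residue polydisc of a torsion point tropicalizes exactly to a point of $\Phi_v\subset\Sigma_v$, not merely near it. We must also check that the canonical metric is given by an integral model trivialization on that disc, so that the Cauchy inequality uses precisely the N\'eron-model norms on $\Lie A$ and $L_x$. For both points I would invoke the Raynaud uniformization description in \cite[Sections~4.2--5.1]{GaoGu_UBC}. On the torus part, a point with $|z|=1$ in the uniformizing coordinate maps to the integer points; the proof will rely on torsion points having integral valuation vector modulo $Y_v$ after rescaling, which is what the identification of $\Phi_v$ with torsion tropical images provides.
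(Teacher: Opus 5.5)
Your argument is essentially the paper's. The paper cites the unperturbed local jet estimate (\cite{GR_min_isog}, via \cite[Lemma~7.4]{GaoGu_UBC}) with a supremum over $A(K_v)$, whereas you prove the Cauchy bound directly on the residue polydisc of $x$; both then conclude because the relevant points tropicalize into $\Phi_v$, where $f_v$ vanishes by Proposition~\ref{prop:local}(i).

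One correction to your third step concerns why $\operatorname{trop}_v$ of that polydisc lies in $\Phi_v$. The reason is that $x\in A(K)\subset A(K_v)$ is $K_v$-rational, so it extends to a N\'eron-model section and its residue polydisc lies in the tube of a single component. Torsion is not the reason: torsion points over $\bar K_v$ (e.g.\ $q^{1/N}$ on a Tate curve) tropicalize to non-lattice points of $\Sigma_v$, where $f_v=b_v\psi_\eta$ need not vanish.
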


\begin{proof}
The local jet estimate of \cite[Section~6.7.1]{GR_min_isog}, in the form recorded in \cite[Lemma~7.4]{GaoGu_UBC}, gives
\[
\|\operatorname{jet}^\ell s(x)\|_v
\leq\sup_{y\in A(K_v)}\|s(y)\|_{m\bar L,v}.
\]
At a good place $f_v=0$. At a bad place, the tropicalization of a $K_v$-rational point belongs to the integral subgroup $\Phi_v\subset\Sigma_v$ (see \cite[Section~4.2.3]{GaoGu_UBC}), and Proposition~\ref{prop:local}(i) gives $f_v|_{\Phi_v}=0$. Thus for every $y\in A(K_v)$,
\[
\|s(y)\|_{m\bar L,v}
=\|s(y)\|_{m\bar L(\epsilon\mathbf f),v}
\leq\|s\|_{m\bar L(\epsilon\mathbf f),\sup,v},
\]
which proves \eqref{eq:local-jet-bound}.
\end{proof}

\begin{proof}[Proof of Theorem~\ref{thm:local-global}]
Take the sequence $(m_j,s_j)$ supplied by Proposition~\ref{prop:big}. For all sufficiently large $j$, one has $m_j>N_\Gamma/g$. Choose $x_j$ and $\ell_j=\operatorname{ord}_{x_j}(s_j)$ as in Lemma~\ref{lem:zero}. Since each $x_j$ is a sum of torsion points, it is torsion.

By Lemma~\ref{lem:local-jet} and \eqref{eq:small-sections-final},
\begin{align}
-\frac1D\sum_{v\in M_K}
\log\|\operatorname{jet}^{\ell_j}s_j(x_j)\|_v
&\geq h_{m_j\bar L(\epsilon\mathbf f)}(s_j)\notag\\
&\geq\frac{m_j}{2gD}\sum_{v\in\Bad(A/K)}I_v+o(m_j).
\label{eq:jet-lower}
\end{align}
On the other hand, Lemmas~\ref{lem:liouville} and \ref{lem:zero} give
\begin{align}
-\frac1D\sum_{v\in M_K}
\log\|\operatorname{jet}^{\ell_j}{s_j}(x_j)\|_v
&\leq\ell_j\left(\frac g2+1\right)h_{\mathrm{Fal}}(A)\notag\\
&\leq g\left(\frac{gm_j}{N_\Gamma}+1\right)
\left(\frac g2+1\right)h_{\mathrm{Fal}}(A).
\label{eq:jet-upper}
\end{align}
Divide by $m_j$ and let $j\to\infty$. We obtain
\begin{equation}\label{eq:NGamma-before-local}
N_\Gamma
\leq
g^3(g+2)h_{\mathrm{Fal}}(A)
\left(\frac1D\sum_{v\in\Bad(A/K)}I_v\right)^{-1}.
\end{equation}
By Proposition~\ref{prop:local}(iii),
\[
\frac1D\sum_{v\in\Bad(A/K)}I_v
\geq
\left(\frac1{32}-\eta\right)
\frac1D\sum_{v\in\Bad(A/K)}\frac1{\#\Phi_v}
=\left(\frac1{32}-\eta\right)S(A).
\]
Letting $\eta\to0^+$ in \eqref{eq:NGamma-before-local} gives \eqref{eq:NGamma-bound}, and hence proves Theorem~\ref{thm:local-global}.
\end{proof}
\endgroup

\section{\texorpdfstring{Uniform bound over $\PP^1$}{Uniform bound over the projective line}}

\begingroup
We now derive Theorem~\ref{thm:P1} from Theorem~\ref{thm:local-global}. For clarity, we restate it.

\begin{thm}\label{thm:P1_Proof}
Let $K=\C(t)$ and let $A/K$ be a semistable abelian variety of dimension $g$, endowed with a polarization $L$. Assume that the toric rank of $A$ is at most one at every place. Let $A_0\subseteq A$ be the maximal abelian subvariety with everywhere good reduction. If $\Gamma\subset A(K)_{\tor}$ is finite and $\Gamma\cap A_0(K)=\{0\}$, then
\begin{equation}\label{eq:P1-bound-proof}
\#\Gamma\leq h^0(A,L)\bigl(768g^5(g+2)\bigr)^g.
\end{equation}
\end{thm}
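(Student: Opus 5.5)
The plan is to apply Theorem~\ref{thm:local-global} repeatedly, peeling off proper abelian subvarieties, and then to bound the ratio $h_{\mathrm{Fal}}(A)/S(A)$ over $\PP^1$. Here $D=\gon(\PP^1)=1$, so all normalizations are unnormalized sums over $\PP^1(\C)$.

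\emph{Step 1 (descending chain).} Put $A_1:=A$, $\Gamma_1:=\Gamma$. If $\Bad(A_i/K)=\varnothing$, then $A_i$ has everywhere good reduction, so $A_i\subseteq A_0$. Since $\Gamma_i\subseteq\Gamma$ and $\Gamma\cap A_0(K)=\{0\}$, we get $\Gamma_i=\{0\}$ and the chain stops. Otherwise Theorem~\ref{thm:local-global} gives $A_{i+1}:=A_i'\subsetneq A_i$, and we set $\Gamma_{i+1}:=\Gamma_i\cap A_{i+1}(K)$. Each $A_{i+1}$, with $L|_{A_{i+1}}$, is again semistable of toric rank $\le1$ at every place, being an abelian subvariety of $A$. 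Writing $g_i=\dim A_i$ and $C_i$ for the right-hand side of \eqref{eq:local-global-minimand} for $A_i$, we have
\[
\#\Gamma_i=\#\Gamma_{i+1}\cdot\#\bigl((\Gamma_i+A_{i+1})/A_{i+1}\bigr)
\le\#\Gamma_{i+1}\,\frac{h^0(A_i,L)}{h^0(A_{i+1},L)}\,C_i^{\,g_i-g_{i+1}}.
\]
Multiplying along the chain, the $h^0$ ratios telescope to at most $h^0(A,L)$, and the exponents sum to at most $g$. Since $C_i$ is increasing in $g_i\le g$, it therefore suffices to prove, for every semistable $B/\C(t)$ of dimension $g'\le g$ with toric rank $\le1$ and $\Bad\ne\varnothing$,
\[
\frac{h_{\mathrm{Fal}}(B)}{S(B)}\le 24g^2,
\]
because $32g^3(g+2)\cdot24g^2=768g^5(g+2)$.

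\emph{Step 2 (upper bound for $h_{\mathrm{Fal}}$).} I will use the Arakelov (Faltings--Peters) inequality for the Hodge bundle of a semistable family over $\PP^1$ with $s$ singular fibres:
\[
\deg\omega_B\le\frac{g'}2(s-2)\le\frac g2\,\#\Bad(B).
\]

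\emph{Step 3 (lower bound for $S$; the main obstacle).} I need $S(B)=\sum_v 1/\#\Phi_v$ to be at least a fixed fraction of $\#\Bad(B)$, of size roughly $1/(12g)$. The difficulty is that a single $\#\Phi_v$ can be large. My first attempt is to compare the two boundary quantities with the Hodge degree by a Noether-type formula. In toric rank one, the local contribution of $v$ to $12\deg\omega$ should be controlled by $\#\Phi_v\cdot b_v$, via \eqref{eq:bv} and the degeneration of the period in the single toric direction, which would bound $h_{\mathrm{Fal}}$ in terms of $h_\partial$. Combining this with Step~2 and Cauchy--Schwarz,
\[
(\#\Bad)^2\le S\cdot h_\partial,
\]
should give $h_{\mathrm{Fal}}\le24g^2 S$. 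If that comparison fails, my fallback is to replace $B$ by a $K$-isogenous quotient that kills the part of $\Gamma$ specializing into $\Phi_v$, so as to reduce the large component groups. I would track how $h_{\mathrm{Fal}}$ and $\#\Gamma$ change under this isogeny. I expect this balancing of $h_{\mathrm{Fal}}$ against $S$ to be the hardest part; Steps~1 and~2 are routine.
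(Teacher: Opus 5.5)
\textbf{There is a genuine gap: Step~3, the ratio bound $h_{\mathrm{Fal}}/S\le 24g^2$, is never proved.} Your Steps~1 and~2 are sound and agree with the paper. The descending chain is the paper's induction on $\dim A$, and Deligne's inequality gives $h_{\mathrm{Fal}}(A')\le\frac g2\,\#\Bad(A')$.

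Step~3 is the one non-formal input, and you leave it as a hope. What the argument needs is an \emph{upper} bound for the boundary quantity in terms of the Faltings height. This is the boundary estimate \eqref{eq:boundary},
\[
h_\partial(A')\le 96\,h_{\mathrm{Fal}}(A'),
\]
valid for every semistable polarized abelian variety $A'$ over $\C(t)$. The paper does not prove it but imports it from \cite[Proposition~8.1]{GaoGu_UBC}. Only with it do your two inequalities close up. With $s=\#\Bad(A')$,
\[
h_{\mathrm{Fal}}(A')^2\le\frac{g^2}{4}s^2\le\frac{g^2}{4}S(A')\,h_\partial(A')\le 24g^2\,S(A')\,h_{\mathrm{Fal}}(A').
\]
Here $24=96/4$, which is exactly where $768=32\cdot 24$ comes from.

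Your heuristic points the wrong way. You say the local contribution of $v$ to $12\deg\omega$ is controlled by $\#\Phi_v b_v$; note that by \eqref{eq:bv} this product is $d_v$, not $\#\Phi_v$. You say this would bound $h_{\mathrm{Fal}}$ in terms of $h_\partial$, i.e.\ bound $h_{\mathrm{Fal}}$ from above. That gives no lower bound on $S$. You need the opposite: each bad place must force at least a fixed multiple of $\#\Phi_v$ into $\deg\omega$.

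Even a correct Noether-type identity would not suffice as you plan to use it. For a semistable family of curves, $12\deg\lambda=\omega_{X/B}^2+\delta$ with $\omega_{X/B}^2\ge0$ does give $h_\partial\le 12\,h_{\mathrm{Fal}}$, but only for Jacobians. Your chain applies Theorem~\ref{thm:local-global} to arbitrary abelian subvarieties $A_i\subsetneq A$. These are not Jacobians, and their restricted polarizations are in general not principal. You need the ratio bound for each of them, so a boundary estimate for general semistable abelian varieties is indispensable.

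The isogeny fallback is not an argument either. Passing to an isogenous quotient alters $\Gamma$, the component groups and $h^0(\cdot,L)$, and you control none of these. It is also unnecessary once the boundary estimate is available.
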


Recall
\[
h_\partial(A)=\frac1D\sum_{v\in\Bad(A/K)}\#\Phi_v.
\]
The boundary estimate proved in \cite[Proposition~8.1]{GaoGu_UBC} is
\begin{equation}\label{eq:boundary}
h_\partial(A)\leq96h_{\mathrm{Fal}}(A).
\end{equation}
For a principally polarized abelian variety, $96$ may be replaced by $12$, although we do not need that improvement here.

We now assume $B=\PP^1$, so $D=1$, and put
\[
s:=\#\Bad(A/K).
\]
If $s>0$, Cauchy's inequality gives
\begin{equation}\label{eq:cauchy}
h_\partial(A)S(A)
=\left(\sum_{v\in\Bad(A/K)}\#\Phi_v\right)
\left(\sum_{v\in\Bad(A/K)}\frac1{\#\Phi_v}\right)
\geq s^2.
\end{equation}

We also use Deligne's Arakelov inequality.
\begin{thm}[Deligne]\label{thm:Deligne-Arakelov}
Let $A/B$ be a semistable abelian variety of relative dimension $g$, and let $q_B=g(B)$. Then
\begin{equation}\label{eq:arakelov}
D h_{\mathrm{Fal}}(A)
\leq
\max\left\{\frac g2(2q_B-2+s),0\right\}.
\end{equation}
Moreover, $h_{\mathrm{Fal}}(A)=0$ if and only if $A$ is isotrivial.
\end{thm}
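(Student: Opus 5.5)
The plan is to prove this through the Hodge-theoretic description of $\omega_A$, in the style of Deligne, Faltings and Peters, using Simpson's polystability of Higgs bundles. Only $\deg\omega_A$ matters, so $D$ plays no role: the claim is $\deg\omega_A\leq\max\{\frac g2(2q_B-2+s),0\}$.

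\emph{Higgs bundle and the three degree relations.} Let $S=\Bad(A/K)$ and $U=B\setminus S$. Then $\mathbb V=R^1f_*\Z$ over $U$ is a polarized $\Z$-variation of Hodge structure of weight one. Semistability forces its local monodromies to be unipotent, so Deligne's canonical extension $\mathcal V$ of $\mathbb V\otimes\cO_U$ carries a logarithmic Gauss--Manin connection. The Hodge filtration extends to a filtration $E^{1,0}\subset\mathcal V$ with quotient $E^{0,1}$. Here $E^{1,0}$ is identified with $e^*\Omega^1_{\mathscr A^\circ/B}$, so $\deg E^{1,0}=\deg\omega_A$. The associated graded is the logarithmic Higgs bundle
\[
E=E^{1,0}\oplus E^{0,1},\qquad \theta:E^{1,0}\to E^{0,1}\otimes\Omega^1_B(\log S).
\]
Since $\mathcal V$ is the canonical extension of a local system with unipotent monodromy, $\deg\mathcal V=0$, and therefore $\deg E^{0,1}=-\deg E^{1,0}$. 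By Simpson's correspondence (tame harmonic metric, i.e.\ the Hodge metric with unipotent monodromy), $E$ is polystable of degree $0$. Consequently every $\theta$-invariant saturated subsheaf of $E$ has degree $\leq0$.

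\emph{Main estimate.} Let $F=\ker\theta\subset E^{1,0}$ and let $r=\rank\theta\leq g$. Let $G\subset E^{0,1}$ be the saturation of $\theta(E^{1,0})\otimes\Omega^1_B(\log S)^{-1}$.
\begin{enumerate}[label=(\alph*)]
\item $F\oplus0$ is $\theta$-invariant, so $\deg F\leq0$. It follows that $\deg(E^{1,0}/F)\geq\deg E^{1,0}$.
\item $E^{1,0}\oplus G$ is $\theta$-invariant, so $\deg E^{1,0}+\deg G\leq0$.
\item $\theta$ induces an injection $E^{1,0}/F\hookrightarrow G\otimes\Omega^1_B(\log S)$ between sheaves of the same rank $r$. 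Hence
\[
\deg G\geq\deg(E^{1,0}/F)-r(2q_B-2+s).
\]
\end{enumerate}
Combining (a), (b) and (c) gives $2\deg E^{1,0}\leq r(2q_B-2+s)$. If $2q_B-2+s\geq0$, this yields the bound with $r\leq g$. If $2q_B-2+s<0$, then necessarily $r=0$, so $\theta=0$. In that case $E^{1,0}\oplus0$ is itself $\theta$-invariant, hence $\deg E^{1,0}\leq0$, which gives the $\max\{\cdot,0\}$ form. Nonnegativity of $\deg\omega_A$ comes from Griffiths semipositivity of the Hodge metric on $E^{1,0}$, together with Mumford's control of that metric at the cusps.

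\emph{Isotriviality criterion.} If $A$ is isotrivial, then after a finite base change the family is constant, and $\omega_A$ becomes trivial up to torsion; hence its degree is $0$. Conversely, suppose $\deg\omega_A=0$. The Chern form of $E^{1,0}$ with respect to the Hodge metric is $\theta^*\theta$-type semipositive, and it is strictly positive wherever $\theta\neq0$. Integrating this form, with Mumford's growth estimates at $S$ to justify the integration, shows $\theta\equiv0$. Then the period map $U\to\mathcal A_g$ is constant. The monodromy preserves a point of the Siegel space and is integral, so it is finite. Torelli then shows that $A$ becomes constant after a finite base change, i.e.\ $A$ is isotrivial.

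\emph{Main obstacle.} The delicate step is justifying the degree-zero polystability, and the degree-versus-curvature integrals, in the presence of the logarithmic poles at $S$. This requires the unipotence supplied by semistability together with Mumford/Schmid norm estimates. I would quote Simpson and Peters for these analytic inputs rather than reprove them.
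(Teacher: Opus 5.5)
Your proof is correct, but it takes a different route from the paper. The paper gives no argument of its own: it cites Deligne's 1987 Arakelov inequality for weight-one variations of Hodge structure, and takes the isotriviality criterion in the form used in Gao--Gu.

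You instead give the Viehweg--Zuo style argument. You use semistability of the degree-$0$ logarithmic Higgs bundle $(E^{1,0}\oplus E^{0,1},\theta)$, which comes from Simpson with trivial parabolic structure because the monodromy is unipotent. You apply it to the two $\theta$-invariant subsheaves $\ker\theta\oplus0$ and $E^{1,0}\oplus G$, and combine this with the equal-rank injection $E^{1,0}/\ker\theta\hookrightarrow G\otimes\Omega^1_B(\log S)$. The degree bookkeeping checks out and gives the sharper inequality $2\deg\omega_A\leq\rank(\theta)\,(2q_B-2+s)$.

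Your isotriviality argument is also the standard one. Chern--Weil for Mumford's good Hodge metric on $\det E^{1,0}$, whose Chern form is a nonnegative multiple of $|\theta|^2$, forces $\theta\equiv0$. This gives finite monodromy, and then the equivalence between polarized abelian varieties and polarized weight-one $\Z$-Hodge structures finishes (this equivalence is what you are calling Torelli).

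Your route gives a proof that is self-contained modulo Simpson's correspondence and the Schmid--Mumford norm estimates, and it has $\rank\theta$ in place of $g$. The paper's citation gives brevity.

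One small simplification: in the case $2q_B-2+s<0$ you do not need $r=0$, which you only justify afterwards via Griffiths semipositivity. The inequality $2\deg E^{1,0}\leq r(2q_B-2+s)\leq0$ already gives the bound.
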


\begin{proof}
This is the Arakelov inequality for weight-one variations of Hodge structure; see \cite{Deligne87}. The final assertion is the equality case used in the same form in \cite{GaoGu_UBC}.
\end{proof}

For $B=\PP^1$, \eqref{eq:arakelov} becomes
\begin{equation}\label{eq:arakelov-P1}
h_{\mathrm{Fal}}(A)\leq\max\left\{\frac g2(s-2),0\right\}.
\end{equation}

\begin{lemma}\label{lem:ratio}
Assume $B=\PP^1$ and $s>0$. Then
\[
\frac{h_{\mathrm{Fal}}(A)}{S(A)}\leq24g^2.
\]
\end{lemma}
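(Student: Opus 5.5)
We combine the three inequalities already available: the boundary estimate \eqref{eq:boundary}, the Cauchy inequality \eqref{eq:cauchy}, and Deligne's Arakelov inequality in the form \eqref{eq:arakelov-P1}. Write $h=h_{\mathrm{Fal}}(A)$ and $S=S(A)$; since $s>0$ we have $S>0$, so the ratio makes sense. If $h=0$ there is nothing to prove, so assume $h>0$. Then \eqref{eq:arakelov-P1} forces $s\geq3$ and gives $h\leq\frac g2(s-2)$.

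The key chain is
\[
s^2\leq h_\partial(A)\,S\leq 96\,h\,S,
\qquad\text{hence}\qquad
\frac1S\leq\frac{96h}{s^2}.
\]
Multiplying by $h$ gives
\[
\frac hS\leq\frac{96h^2}{s^2}\leq\frac{96\cdot\frac{g^2}{4}(s-2)^2}{s^2}\leq 24g^2,
\]
using $(s-2)^2\leq s^2$. This is exactly the claimed bound.

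The only point needing care is the degenerate case handling: we must make sure $h>0$ is used to get $h\leq\frac g2(s-2)$ rather than $h\leq 0$ from the maximum, and that $h=0$ trivially satisfies the inequality. Nothing else is an obstacle; the argument is a direct algebraic combination, and the constant $24=96/4$ matches the statement.
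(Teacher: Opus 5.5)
Your proof is correct and follows the paper's argument: you combine \eqref{eq:cauchy} and \eqref{eq:boundary} to get $S(A)\geq s^2/(96h_{\mathrm{Fal}}(A))$, then use \eqref{eq:arakelov-P1} with $s\geq3$. The only cosmetic difference is that you treat $h_{\mathrm{Fal}}(A)=0$ as a trivial case, whereas the paper observes that this case cannot occur, since $h_\partial(A)>0$ and \eqref{eq:boundary} force $h_{\mathrm{Fal}}(A)>0$.
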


\begin{proof}
Since $s>0$, one has $h_\partial(A)>0$, and \eqref{eq:boundary} implies $h_{\mathrm{Fal}}(A)>0$. Thus the positive term occurs in \eqref{eq:arakelov-P1}, so $s\geq3$ and
\[
h_{\mathrm{Fal}}(A)\leq\frac g2(s-2).
\]
Using \eqref{eq:cauchy} and \eqref{eq:boundary},
\[
S(A)\geq\frac{s^2}{h_\partial(A)}
\geq\frac{s^2}{96h_{\mathrm{Fal}}(A)}.
\]
Therefore
\[
\frac{h_{\mathrm{Fal}}(A)}{S(A)}
\leq96\left(\frac{h_{\mathrm{Fal}}(A)}s\right)^2
\leq24g^2\left(\frac{s-2}{s}\right)^2
\leq24g^2.
\]
\end{proof}

\begin{proof}[Proof of Theorem~\ref{thm:P1_Proof}]
For $d\geq1$, put
\[
C_d:=768d^5(d+2),
\qquad C_0:=1.
\]
We prove by induction on $g=\dim A$ that
\begin{equation}\label{eq:induction-goal}
\#\Gamma\leq h^0(A,L)C_g^g.
\end{equation}
If $A$ has good reduction everywhere, then $A=A_0$ and the hypothesis gives $\Gamma=\{0\}$, so the assertion is immediate.

Assume now that $\Bad(A/K)\neq\varnothing$. By Theorem~\ref{thm:local-global} and Lemma~\ref{lem:ratio}, there is a proper abelian subvariety $A'\subsetneq A$, of dimension $h$, such that
\begin{equation}\label{eq:induction-quotient}
\#\left(\frac{\Gamma+A'}{A'}\right)
\leq
\frac{h^0(A,L)}{h^0(A',L|_{A'})}C_g^{g-h}.
\end{equation}
Put $\Gamma':=\Gamma\cap A'(K)$. Semistability and the condition on toric ranks are inherited by $A'$. Moreover, the maximal everywhere-good-reduction abelian subvariety $A'_0$ of $A'$ is contained in $A_0$. Hence
\[
\Gamma'\cap A'_0(K)=\{0\}.
\]
The induction hypothesis, with the evident convention when $h=0$, gives
\[
\#\Gamma'\leq h^0(A',L|_{A'})C_h^h.
\]
Since
\[
\#\Gamma
=\#\Gamma'\cdot\#\left(\frac{\Gamma+A'}{A'}\right),
\]
we obtain from \eqref{eq:induction-quotient}
\[
\#\Gamma
\leq h^0(A,L)C_h^hC_g^{g-h}
\leq h^0(A,L)C_g^g,
\]
because $C_h\leq C_g$. This completes the induction.
\end{proof}
\endgroup

\section{\texorpdfstring{Lefschetz fibrations and the bound for $\NS(X)_{\tor}$}{Lefschetz fibrations and the Neron--Severi torsion bound}}

\begingroup
\subsection{Reduction to surfaces}

\begin{lemma}\label{lem:weak-lefschetz-torsion}
Let $X$ be a smooth projective complex variety of dimension $n\geq2$, let $L$ be very ample, and let
\[
X=X_n\supset X_{n-1}\supset\cdots\supset X_2=S
\]
be a smooth flag such that $X_{i-1}$ is a general member of $|L|_{X_i}|$. Then restriction induces an injection
\[
\NS(X)\hookrightarrow\NS(S).
\]
\end{lemma}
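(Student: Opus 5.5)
Since each $X_{i-1}$ is a smooth ample divisor in $X_i$, induction on $i$ reduces the lemma to one step: if $Y\subset X$ is a smooth member of $|L|$ with $\dim X\geq3$ (so $\dim Y\geq2$), then $\NS(X)\to\NS(Y)$ is injective. Composing these injections down the flag gives $\NS(X)\hookrightarrow\NS(S)$. For the one-step claim, identify Néron--Severi groups with cohomology via the exponential sequence. On a smooth projective complex variety the first Chern class gives an injection $\NS(X)\hookrightarrow H^2(X,\Z)$, with image $\ker\bigl(H^2(X,\Z)\to H^2(X,\cO_X)\bigr)$. This identification keeps the torsion part intact, which is the point of working with $\Z$ rather than $\Q$ coefficients. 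The restriction maps are compatible with $c_1$, so the square
\[
\begin{array}{ccc}
\NS(X)&\hookrightarrow&H^2(X,\Z)\\
\downarrow&&\downarrow\\
\NS(Y)&\hookrightarrow&H^2(Y,\Z)
\end{array}
\]
commutes. It therefore suffices to show that $H^2(X,\Z)\to H^2(Y,\Z)$ is injective.

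This is the integral Lefschetz hyperplane theorem. $X\setminus Y$ is a smooth affine variety of complex dimension $n$, so by Andreotti--Frankel it has the homotopy type of a CW complex of real dimension at most $n$. By Lefschetz duality,
\[
H^k(X,Y;\Z)\cong H_{2n-k}(X\setminus Y;\Z),
\]
which vanishes for $2n-k>n$, that is, for $k<n$. The long exact sequence of the pair then shows that $H^k(X,\Z)\to H^k(Y,\Z)$ is an isomorphism for $k<n-1$ and injective for $k=n-1$. Since $n\geq3$, we have $2\leq n-1$, so restriction in degree $2$ is injective (an isomorphism once $n\geq4$), with integer coefficients and hence including torsion.

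The only delicate point is to avoid the rational Lefschetz theorem, which would lose exactly the torsion the main theorem is about. The vanishing argument above works integrally, because Andreotti--Frankel is a homotopy statement and Lefschetz duality holds with $\Z$ coefficients. The smoothness of each $X_i$, guaranteed by Bertini for general members of a very ample system, is what allows the induction to repeat at every stage. The flag stops at $S=X_2$, so every step has ambient dimension at least $3$ and the hypothesis $n\geq3$ is always met.
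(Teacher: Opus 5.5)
Your proof is correct and follows the same route as the paper. Both combine the injectivity of $c_1\colon\NS(Z)\hookrightarrow H^2(Z,\Z)$, which comes from the exponential sequence, with integral weak Lefschetz applied step by step down the flag (an isomorphism on $H^2$ when the ambient dimension is at least $4$, an injection when it is $3$). The only difference is that you prove the integral Lefschetz hyperplane theorem via Andreotti--Frankel and Lefschetz duality, whereas the paper simply cites it.
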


\begin{proof}
Integral weak Lefschetz gives an isomorphism
\[
H^2(X_i,\Z)\longrightarrow H^2(X_{i-1},\Z)
\]
for $i\geq4$, and an injection for $i=3$. Hence $H^2(X,\Z)\to H^2(S,\Z)$ is injective. For every smooth projective complex variety $Z$, the exponential sequence identifies the kernel of $c_1:\Pic(Z)\to H^2(Z,\Z)$ with $\Pic^0(Z)$; consequently
\[
c_1:\NS(Z)=\Pic(Z)/\Pic^0(Z)\hookrightarrow H^2(Z,\Z)
\]
is injective. If a class in $\NS(X)$ restricts to zero in $\NS(S)$, its Chern class restricts to zero in $H^2(S,\Z)$, hence is zero already on $X$. The class itself is therefore zero.
\end{proof}

\begin{thm}\label{thm:surface}
Let $X$ be a smooth projective connected complex surface and let $L$ be very ample. Put
\[
g(L):=1+\frac12(L^2+K_X\cdot L),
\]
the genus of a smooth member of $|L|$. Then
\[
\#\NS(X)_{\tor}\leq\Bfun(g(L)).
\]
\end{thm}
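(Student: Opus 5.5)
We will prove Theorem~\ref{thm:surface} by realizing $X$ (after blow-up) as a Lefschetz fibration over $\PP^1$, and injecting $\NS(X)_{\tor}$ into the torsion of the Mordell--Weil group of the Jacobian of the generic fiber. Then Theorem~\ref{thm:P1_Proof} applies with $g=g(L)$ and $h^0(A,L)=1$ for the principal polarization.

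\textbf{Step 1 (Lefschetz pencil).} Choose a general pencil in $|L|$. Its base locus consists of $L^2$ reduced points. Every member is a curve with at most one ordinary node, and the general member is smooth of genus $g(L)$. Blow up the base points to get $\pi:\widetilde X\to X$ and a fibration $f:\widetilde X\to\PP^1$. The fibers are integral with at most one node, and the exceptional curves $E_1,\dots,E_{L^2}$ are sections. Blowing up points does not change torsion: $\NS(\widetilde X)=\pi^*\NS(X)\oplus\bigoplus\Z E_i$, so $\NS(\widetilde X)_{\tor}\cong\NS(X)_{\tor}$. If $g(L)=0$, then $X$ is rational (it contains a smooth rational curve $C$ with $C^2>0$), so $\NS(X)_{\tor}=0=\Bfun(0)-1$ and the bound holds. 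So assume $q:=g(L)\geq1$.

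\textbf{Step 2 (map to the Mordell--Weil group).} Let $C=\widetilde X_K$ be the generic fiber over $K=\C(t)$ and $A=\Pic^0_{C/K}$, with its principal polarization $\Theta$, so that $h^0(A,\Theta)=1$. Restriction gives $\Pic(\widetilde X)\to\Pic(C)$. Use the section $E_1$ to normalize degree: send a class $M$ to $M|_C-(\deg M|_C)E_1|_C\in A(K)$. Torsion classes in $\NS$ have degree zero on the fiber, and $\Pic^0(\widetilde X)$ should map into the trace part. The kernel of $\Pic(\widetilde X)\to\Pic(C)$ is generated by vertical divisors, which are multiples of the fiber class $F$, because all fibers are irreducible. $F$ is non-torsion. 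Hence a torsion class of $\NS$ killed by the map would have a multiple lying in $\Z F+\Pic^0$, forcing it into $\Pic^0$. The upshot is an injection
\[
\NS(X)_{\tor}\hookrightarrow \bigl(A(K)/\tau B(\C)\bigr)_{\tor},
\]
where $B$ is the $K/\C$-trace, i.e.\ the image of $\Pic^0(\widetilde X)$; this is the Shioda--Tate picture.

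\textbf{Step 3 (apply Theorem~\ref{thm:P1_Proof}).} The family is semistable: nodal integral fibers. Each singular fiber has a single node on an irreducible curve, so the toric rank is at most one. I must handle the good-reduction part $A_0$ and the trace, and this is the main obstacle. Everywhere-good-reduction abelian subvarieties over $\PP^1$ are isotrivial by Theorem~\ref{thm:Deligne-Arakelov}: the subvariety has $s=0$, so $h_{\mathrm{Fal}}=0$. Since $\PP^1$ is simply connected, they are in fact constant, so $A_0$ equals the constant part $B_K$, the trace.

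I would then show that torsion coming from $\NS(X)$ meets $A_0$ trivially. A torsion point of $A$ that lies in a constant abelian subvariety over simply connected $\PP^1$ comes from $\Pic^0(\widetilde X)$. This uses the fact that $\Pic^0(\widetilde X)\to B$ is an isogeny (in fact surjective), so it corresponds to a zero class of $\NS$. I would secure this by comparing ranks, $b_1(\widetilde X)=2\dim B$, and by checking that constant torsion sections extend to divisors algebraically equivalent to zero. Then the group $\Gamma$, the image of $\NS(X)_{\tor}$, can be lifted to a finite subgroup with $\Gamma\cap A_0(K)=\{0\}$. Lifting requires care if the image in $A/A_0$ is used, so alternatively apply the theorem to the quotient $A/A_0$ with the induced polarization. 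The cleanest route is to take $\Gamma$ as a complement: choose representatives so that $\Gamma$ is a subgroup meeting $A_0$ trivially. This works because the relevant torsion group is the quotient by $A_0(K)_{\tor}$, and a finite subgroup of $A(K)_{\tor}$ mapping isomorphically onto it exists after intersecting with a suitable Poincaré complement $A_1$ with $A_1\cap A_0$ finite. The resulting error is absorbed into $h^0(A_1,\Theta|_{A_1})$, which I would need to control, possibly via Theorem~\ref{thm:local-global} directly.

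Finally, Theorem~\ref{thm:P1_Proof} gives $\#\NS(X)_{\tor}\leq h^0(A,\Theta)\bigl(768q^5(q+2)\bigr)^q=\Bfun(q)$.
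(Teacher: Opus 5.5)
Your outline is the paper's: a Lefschetz pencil, exceptional curves as sections, the Jacobian $A$ of the generic fiber with $h^0(A,\Theta)=1$, $A_0$ equal to the constant trace image $T$ via Deligne's Arakelov inequality and simple connectivity of $\PP^1$, an injection of $\NS(\widetilde X)_{\tor}$ into $A(K)/T(K)$, then Theorem~\ref{thm:P1}. But the step that actually feeds Theorem~\ref{thm:P1} is left unresolved. You need a finite $\Gamma\subset A(K)_{\tor}$ mapping isomorphically onto the image $\Gamma'$ of $\NS(X)_{\tor}$, with $\Gamma\cap A_0(K)=\{0\}$. You assert without reason that representatives can be chosen to form such a subgroup, and then fall back on a Poincar\'e complement $A_1$. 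That fallback fails. A $K$-point of $A/A_0$ need not lift to $A_1(K)$. Even when it does, Theorem~\ref{thm:P1} for $(A_1,\Theta|_{A_1})$ carries the factor $h^0(A_1,\Theta|_{A_1})$, which is in general larger than $1$ and which you do not bound, so you would not reach $\Bfun(g(L))$; your last line quietly applies the theorem to $(A,\Theta)$ instead. The missing idea is short. Since $T$ is constant and every rational map $\PP^1\dashrightarrow T$ is constant, $A_0(K)=T(K)=T(\C)$. The group $T(\C)$ is divisible, hence an injective abelian group. So if $E_\Gamma\subset A(K)$ is the full preimage of $\Gamma'$, the extension $0\to T(\C)\to E_\Gamma\to\Gamma'\to0$ splits, and the image of a splitting is the required $\Gamma$.

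Step~2 also rests on a false claim: singular fibers of a Lefschetz pencil need not be irreducible. Suppose the embedded surface has a line through every point, for instance a ruled surface over a curve of genus $\geq1$ embedded so that the rulings are lines; then $g(L)$ is that genus. Every tangent hyperplane contains the ruling through the point of tangency, so each singular member is $\ell+C'$ with $\ell\cdot C'=1$. The kernel of $\Pic(\widetilde X)\to\Pic(C)$ then contains fiber components that are not multiples of $F$ in $\NS(\widetilde X)$. The paper handles this by proving that the subgroup of $\NS(\widetilde X)$ generated by vertical classes is free. It is spanned by $F$ and, in each reducible fiber, the components not meeting a fixed exceptional section $E$. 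Pairing with $E$ kills the coefficient of $F$, and Zariski's lemma (negative definiteness) kills the rest.

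Relatedly, ``a multiple lying in $\Z F+\Pic^0$'' proves nothing, since it only restates that $[M]$ is torsion. You need $M$ itself to become vertical after twisting by an element of $\Pic^0(\widetilde X)$, and that uses surjectivity of $\Pic^0(\widetilde X)(\C)\to T(\C)$. Your proposed rank comparison $b_1(\widetilde X)=2\dim T$ is exactly what Deligne's global invariant cycle and fixed part theorems supply (the paper's Lemma~\ref{lem:Fixedpart}). You should invoke them rather than leave this as a plan.
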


Theorem~\ref{thm:surface} implies Theorem~\ref{thm:polarized-variety}. Indeed, choose a flag as in Lemma~\ref{lem:weak-lefschetz-torsion}. The surface $S=X_2$ is the complete intersection of $n-2$ divisors linearly equivalent to $L$, so adjunction gives
\[
K_S=(K_X+(n-2)L)|_S.
\]
For a smooth $C\in|L|_S|$, a second application of adjunction yields
\[
2g(C)-2=(K_S+C)\cdot C
=(K_X+(n-1)L)\cdot L^{n-1}.
\]
Thus $g(C)=q(X,L)$, and Lemma~\ref{lem:weak-lefschetz-torsion} together with Theorem~\ref{thm:surface} gives Theorem~\ref{thm:polarized-variety}.

\subsection{Construction of a Lefschetz fibration}
A point $x$ on a variety over an algebraically closed field is an \emph{ordinary double point} if its completed local ring is
\[
\widehat\cO_{X,x}\simeq k[[x_1,\ldots,x_n]]/(f),
\]
where $f\in\mathfrak m^2$ and the image of $f$ in $\mathfrak m^2/\mathfrak m^3$ is a nondegenerate quadratic form.

For a smooth projective complex surface $X$ and a very ample line bundle $L$, a \emph{Lefschetz pencil in $|L|$} is a pencil whose base locus is reduced and whose singular members are reduced curves with exactly one ordinary double point. Blowing up the base locus turns the pencil into a proper flat morphism to $\PP^1$.

\begin{thm}[Lefschetz pencil theorem]\label{thm:LefFibExist}
Let $X$ be a smooth projective complex surface and let $L$ be very ample. For a general two-dimensional subspace $\langle s_0,s_1\rangle\subset H^0(X,L)$, the base locus
\[
Z=V(s_0)\cap V(s_1)
\]
consists of $L^2$ distinct points, the critical points of the pencil lie outside $Z$, and
\[
p:\widetilde X:=\operatorname{Bl}_ZX\longrightarrow\PP^1,
\qquad
x\longmapsto[s_0(x):s_1(x)],
\]
is a semistable fibration whose singular fibers are reduced and have exactly one ordinary double point.
\end{thm}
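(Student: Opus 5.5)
The plan is to run the classical argument for existence of Lefschetz pencils, using the embedding $\varphi_L:X\hookrightarrow\PP^N=\PP(H^0(X,L)^\vee)$ and the dual variety $X^\vee\subset(\PP^N)^\vee$. A pencil $\langle s_0,s_1\rangle$ is the same as a line $\Lambda\subset(\PP^N)^\vee$. The properties will be proved one by one for a general line in the Grassmannian $G(2,H^0(X,L))$, each being an open dense condition. In the end we intersect the finitely many open dense subsets.

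The base locus is handled first. The axis of the pencil is a codimension-two linear subspace $\Lambda^\perp\subset\PP^N$. By Bertini for very ample systems, applied twice, a general $V(s_0)$ is a smooth curve, and a general second section meets it transversally. So $Z$ is reduced, and by Bézout it has $L^2$ points.

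The singular fibres come next. We use the classical facts about the dual variety in characteristic zero, which follow from reflexivity (biduality). Either $X^\vee$ is a hypersurface, and there is a dense open subset $U\subset X^\vee_{\rm sm}$ whose points are hyperplanes tangent at exactly one point with an ordinary double point. Or $X^\vee$ has codimension $\ge2$. In that case a general line misses $X^\vee$, so all fibres are smooth and the remaining claims are vacuous, apart from flatness.

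In the hypersurface case, a general line $\Lambda$ meets $X^\vee$ transversally in $\deg X^\vee$ points, all lying in $U$. To get this we must show that $X^\vee\setminus U$ has codimension $\ge2$ in $(\PP^N)^\vee$, so that a general line avoids it. This is the step I expect to be the main obstacle. It needs a dimension count on the conormal variety $\{(x,H):T_xX\subset H\}$, which is irreducible of dimension $N-1$. Three bad loci must each be shown to map to something of dimension $\le N-2$: hyperplanes tangent at two points, hyperplanes tangent at a degenerate point (where the Hessian is degenerate), and non-reduced members. For the last, a member with a multiple component is tangent along a whole curve, so it lies in a family of small dimension. I would cite SGA7 (Exposé XVII) or Lamotke for these statements rather than redo them.

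The remaining points are short. Tangency at a point of $Z$ is excluded because it is a further closed condition. Indeed, $H\in\Lambda$ tangent at $z\in\Lambda^\perp\cap X$ forces the line $\Lambda$ to meet the proper closed subset of pairs (tangent hyperplane, point of the axis). A count of dimensions shows that a general $\Lambda$ avoids it.

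On $\widetilde X=\operatorname{Bl}_ZX$ the map $p$ is a morphism. The exceptional curves are sections of $p$, and each fibre of $p$ is isomorphic to the corresponding member of the pencil, because $Z$ consists of reduced smooth points of every member. So $p$ is flat, since it is a surjection from an integral surface onto a smooth curve. Its fibres are reduced curves with at most one node. Nodal reduced fibres on a smooth total space are exactly the semistable condition. This finishes the proof.
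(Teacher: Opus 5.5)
Your proposal is correct and follows the same classical route as the paper, which simply cites the Lefschetz pencil theorem (Freitag--Kiehl) and chooses the line off the discriminant loci. Your step-by-step dual-variety argument is that same route made explicit. Two of your steps are easier than you make them. The codimension-two claim is automatic once $U$ is dense open in the irreducible hypersurface $X^\vee$. Smoothness of every member at $Z$ already follows from transversality of $V(s_0)$ and $V(s_1)$, since then $ds_0(z)$ and $ds_1(z)$ are independent.

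One thing is missing: semistability also requires connected fibres, which you do not check. It follows from Stein factorization, because $p$ has the section $E_z$ and $\widetilde X$ is irreducible, so the finite part of the factorization has degree one over $\PP^1$.
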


\begin{proof}
This is the classical Lefschetz pencil theorem in characteristic zero; see \cite[Chapter~III, Proposition~1.10]{FreitagKiehl_EtCohandWeilConj}. The transversality assertions follow by choosing the line in the dual projective space outside the standard proper discriminant loci.
\end{proof}

For $z\in Z$, denote by $E_z$ the exceptional curve over $z$. Since $\#Z=L^2>0$, at least one such curve exists.

\begin{lemma}\label{lem:ExcepDivSection}
Every exceptional curve $E_z$ is a section of $p$.
\end{lemma}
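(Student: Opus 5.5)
The plan is a local computation at a base point. Fix $z\in Z$. Since $Z$ is reduced, $s_0$ and $s_1$ vanish transversally at $z$, so we can choose local analytic (or étale) coordinates $(u,w)$ on $X$ near $z$. After trivializing $L$ near $z$, take $s_0=u$ and $s_1=w$; this works because the differentials $ds_0(z),ds_1(z)$ are linearly independent, which is exactly the statement that the scheme $V(s_0)\cap V(s_1)$ is reduced at $z$.

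The blow-up $\operatorname{Bl}_zX$ is then locally the closure of the graph of $[u:w]$ in a neighborhood of $z$ times $\PP^1$, and $E_z=\{z\}\times\PP^1$. The map $p$ extends the rational map $[s_0:s_1]=[u:w]$, so it is the restriction of the second projection. Hence $p|_{E_z}:E_z\to\PP^1$ is the identity of $\PP^1$ under the identification $E_z=\PP(T_zX)\cong\PP^1$ given by $[u:w]$, and in particular it is an isomorphism. A section of $p$ is then the inverse map $\sigma_z:=(p|_{E_z})^{-1}:\PP^1\to E_z\subset\widetilde X$, and it satisfies $p\circ\sigma_z=\mathrm{id}$.

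The remaining point is to confirm that $p$ is a genuine morphism on all of $\widetilde X$, which Theorem~\ref{thm:LefFibExist} already provides. Away from $Z$ this is automatic because $s_0,s_1$ have no common zero there. Near $E_z$ the two charts $\{w=ut\}$ and $\{u=wt'\}$ show that $p$ is given by $t$ (respectively $t'$) and so is regular.

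The only subtle step, mild as it is, is the identification of the reducedness of $Z$ at $z$ with the independence of $ds_0(z)$ and $ds_1(z)$. That independence is what makes $p|_{E_z}$ of degree one. If $Z$ were non-reduced at $z$, then $p|_{E_z}$ could have higher degree or be constant, and the exceptional divisor would not be a single section. Degree one also follows numerically: $E_z\cdot F=E_z\cdot(\pi^*L-\sum_{z'}E_{z'})=0+1=1$ for a fiber $F$, since $F\sim\pi^*L-\sum E_{z'}$ and $E_z^2=-1$. So $p|_{E_z}$ is a finite map of degree $1$ between smooth curves, hence an isomorphism, and I would include this as the cleanest argument.
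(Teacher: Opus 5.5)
Your proof is correct and is essentially the paper's: take $s_0=u$, $s_1=v$ as regular parameters at $z$, write the blow-up locally as $\{ub=va\}$, and observe that $p$ restricted to $u=v=0$ is the identity of $\PP^1$. The intersection-number check $E_z\cdot F=1$ that you add is a valid independent confirmation, but the paper does not need it.
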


\begin{proof}
Near $z$, trivialize $L$ and choose regular parameters $u,v$ such that $s_0=u$ and $s_1=v$. The blowup is locally
\[
\operatorname{Bl}_{(u,v)}\A^2
=\{((u,v),[a:b]):ub=va\},
\]
and the induced morphism sends $((u,v),[a:b])$ to $[a:b]$. On the exceptional curve $u=v=0$, this is the identity map $\PP^1_{[a:b]}\to\PP^1$. Thus $E_z$ is a section.
\end{proof}

Fix from now on a Lefschetz fibration $p:\widetilde X\to\PP^1$ as above. Its generic fiber is a smooth connected curve of genus $g(L)$. Put $K=\C(\PP^1)$ and let
\[
A:=\Pic^0(\widetilde X_\eta)
\]
be its Jacobian. We use $\Pic^0_{\widetilde X/\PP^1}$ for the multidegree-zero identity component of the relative Picard functor.

\begin{lemma}\label{lem:semiabelian-picard}
The group scheme $\Pic^0_{\widetilde X/\PP^1}$ is the identity component of the N\'eron model of $A$. It is a semiabelian scheme, and its toric rank at every point of $\PP^1$ is at most one.
\end{lemma}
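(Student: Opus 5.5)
The plan is to invoke Raynaud's theorem on the relative Picard functor of a regular, flat, proper curve with a section. By Theorem~\ref{thm:LefFibExist}, $p:\widetilde X\to\PP^1$ is proper and flat. The total space $\widetilde X$ is a smooth surface, hence regular. The generic fiber $\widetilde X_\eta$ is smooth and geometrically connected. Every fiber is reduced, and by Lemma~\ref{lem:ExcepDivSection} the curve $E_z$ is a section. Since a section passes through the smooth locus of each fiber, every fiber has gcd of multiplicities equal to $1$, so $p$ is cohomologically flat in dimension zero. Raynaud's theorem (Bosch--L\"utkebohmert--Raynaud, N\'eron Models, \S9.5, Theorem~4) then says that $\Pic^0_{\widetilde X/\PP^1}$ is a separated smooth group scheme and is the identity component of the N\'eron model of $A=\Pic^0(\widetilde X_\eta)$. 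I will first check these hypotheses carefully, including that $\Pic^0$ here is the identity component (multidegree zero) and that it is representable. Representability holds because fibers are reduced with a section, and also because each fiber is irreducible, as explained next.

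Next I identify the fibers. Over a point $t$ where the fiber is smooth, $\Pic^0$ is an abelian variety, so the toric rank is $0$. The main step is the singular fibers. By Theorem~\ref{thm:LefFibExist}, such a fiber $C_t$ is reduced with exactly one ordinary double point $x$. It is also irreducible, by the standard Lefschetz-pencil fact that the vanishing cycle is nonzero when the fibers of a pencil on a simply...

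I expect this irreducibility to be the delicate point. Rather than use that fact, I would argue without it. For a reduced proper curve, $\Pic^0_{C_t}$ is an extension of an abelian variety (the $\Pic^0$ of the normalization) by a linear group. Since $x$ is an ordinary node, the singularity contributes only a torus $\Gm$ (the units of $k\times k$ modulo $k$), with no unipotent part. If $C_t$ is irreducible, the normalization $\widetilde C_t$ is connected, giving
\[
0\to\Gm\to\Pic^0_{C_t}\to\Pic^0_{\widetilde C_t}\to0,
\]
so the toric rank is $1$. If $C_t$ has two components meeting at the single node, the dual graph is a tree, so $\Pic^0_{C_t}$ is an abelian variety and the toric rank is $0$. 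In general the toric rank equals the first Betti number of the dual graph, which is at most (number of nodes) $-$ (number of components) $+1\leq1$. Either way it is at most one, so irreducibility is not needed.

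Finally, semiabelianness follows because each fiber of $\Pic^0_{\widetilde X/\PP^1}$ is an extension of an abelian variety by a torus of dimension $0$ or $1$, as just shown. Equivalently, $A$ has semistable reduction everywhere since all fibers are reduced nodal curves (Deligne--Mumford), and the identity component of the N\'eron model of a semistable abelian variety is semiabelian. I also need the fiber of the N\'eron model identity component to agree with $\Pic^0_{C_t}$; this is part of Raynaud's theorem.
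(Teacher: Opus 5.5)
Your proof is correct and follows the paper's route. You invoke \cite[Theorem~9.5/4]{BLR} to identify $\Pic^0_{\widetilde X/\PP^1}$ with the identity component of the N\'eron model. You then use the generalized-Jacobian exact sequence, which shows the toric rank is $b_1$ of the dual graph: $1$ for an irreducible one-nodal fiber and $0$ for a reducible one. Your explicit checks of regularity, reducedness, cohomological flatness and semiabelianness are a welcome addition. You should delete the unfinished remark about irreducibility of the singular fibers, including its use for representability, since you correctly show it is not needed.
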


\begin{proof}
Because $p$ is a regular semistable curve, \cite[Theorem~9.5/4]{BLR} identifies the multidegree-zero relative Picard scheme with the identity component of the N\'eron model of the generic Jacobian.

Let $C$ be a geometric fiber, let $\widetilde C_i$ be the normalizations of its irreducible components, and let $\Gamma_C$ be its dual graph. The generalized Jacobian fits into the standard exact sequence
\begin{equation}\label{eq:generalized-jacobian}
0\longrightarrow
\Hom\bigl(H_1(\Gamma_C,\Z),\Gm\bigr)
\longrightarrow\Pic^0(C)
\longrightarrow\prod_i\Pic^0(\widetilde C_i)
\longrightarrow0.
\end{equation}
Thus the toric rank is $b_1(\Gamma_C)$. A fiber of a Lefschetz fibration is smooth or has one node. In the latter case, the dual graph has first Betti number $0$ if the fiber is reducible and $1$ if it is irreducible. Hence the toric rank is at most one.
\end{proof}

Since $\widetilde X$ is the blowup of the smooth surface $X$ at the reduced set $Z$, pullback and the exceptional classes give
\begin{equation}\label{eq:blowup-picard}
\Pic(\widetilde X)
\simeq
\Pic(X)\oplus\bigoplus_{z\in Z}\Z[E_z],
\qquad
\NS(\widetilde X)
\simeq
\NS(X)\oplus\bigoplus_{z\in Z}\Z[E_z].
\end{equation}
In particular,
\begin{equation}\label{eq:blowup-torsion}
\NS(\widetilde X)_{\tor}\simeq\NS(X)_{\tor}.
\end{equation}

Let $(T_0,\tau)$ be the $K/\C$-trace of $A$, and let
\[
T:=\tau((T_0)_K)\subseteq A
\]
be its image. In characteristic zero the finite kernel of $\tau$ is constant, so the trace image $T$ is a constant $K$-abelian subvariety. Let
\[
B_0:=\operatorname{Im}\bigl(\Pic^0(\widetilde X)_K\longrightarrow A\bigr).
\]
By the universal property of the trace, $B_0\subseteq T$.

\begin{lemma}\label{lem:Fixedpart}
One has $B_0=T$.
\end{lemma}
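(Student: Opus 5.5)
The plan is to compare dimensions. Since $B_0\subseteq T$ is already known and both are abelian subvarieties of $A$, it suffices to show $\dim B_0\geq\dim T$. Here $B_0$ is an abelian subvariety because it is the image of the constant abelian variety $\Pic^0(\widetilde X)_K$, hence connected and reduced. I will work over the smooth locus $U\subset\PP^1$ of $p$ and fix a base point $t\in U$ with fiber $F=\widetilde X_t$. Everything will be translated into statements about $H^1(F,\Q)$ and its monodromy invariants under $\pi_1(U,t)$.

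\emph{Step 1 (the trace lies in the invariants).} The trace image $T$ is a constant abelian subvariety, say $T\simeq (T_1)_K$ for a complex abelian variety $T_1$. After shrinking $U$, its closure in the smooth family $\Pic^0_{\widetilde X_U/U}$ is the constant subfamily $T_1\times U$. Consequently the sublattice
\[
H_1(T_1,\Z)\subset H_1(\Pic^0(F),\Z)\simeq H^1(F,\Z)^\vee
\]
is fixed by the monodromy of $\pi_1(U,t)$. Passing to duals, with $\Q$-coefficients, I obtain
\[
2\dim T\leq\dim_\Q H^1(F,\Q)^{\pi_1(U,t)}.
\]

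\emph{Step 2 (the invariants come from $\widetilde X$).} I will apply Deligne's global invariant cycle theorem to the smooth projective surface $\widetilde X$ and the morphism $p$. It gives
\[
H^1(F,\Q)^{\pi_1(U,t)}=\operatorname{Im}\bigl(H^1(\widetilde X,\Q)\to H^1(F,\Q)\bigr).
\]

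\emph{Step 3 (identifying the image with $B_0$).} For a smooth projective variety $Y$ one has $H^1(Y,\Z)\simeq H_1(\Pic^0(Y),\Z)$ via the exponential sequence, and this identification is functorial in $Y$. Hence the restriction map $H^1(\widetilde X,\Q)\to H^1(F,\Q)$ is the map on rational $H_1$ induced by the homomorphism $\Pic^0(\widetilde X)\to\Pic^0(F)$. The image of a homomorphism of complex tori has rational $H_1$ equal to the image of the induced map on rational $H_1$. Moreover, the image of $\Pic^0(\widetilde X)\to\Pic^0(F)$ is the fiber at $t$ of the spread-out of $B_0$, at least for general $t$, because $B_0$ is defined as the image over the generic point. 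Combining this with Steps 1 and 2 gives
\[
2\dim B_0=\dim_\Q\operatorname{Im}\bigl(H^1(\widetilde X,\Q)\to H^1(F,\Q)\bigr)\geq 2\dim T.
\]
Therefore $\dim B_0\geq\dim T$, and hence $B_0=T$.

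The main obstacle is Step 3. I must check that the Hodge-theoretic restriction map really coincides with the map induced by the morphism of Picard varieties, compatibly with the identifications $H^1=H_1(\Pic^0)$. I also must check that specializing the generic image $B_0$ to the fiber over $t$ commutes with taking images, which holds over a dense open subset of $U$. If the direct identification becomes awkward, I would instead argue via Albanese duality: compare $\operatorname{Alb}(F)\to\operatorname{Alb}(\widetilde X)$ with the dual map on $H_1$, and use Poincaré complete reducibility to pass between images and quotients.
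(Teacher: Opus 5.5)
Your proposal is correct and is essentially the paper's argument. Both proofs use Deligne's global invariant cycle theorem to identify the image of $H^1(\widetilde X,\Q)\to H^1(F,\Q)$, which has dimension $2\dim B_0$, with the monodromy invariants, then compare dimensions with $T$ and conclude from $B_0\subseteq T$. The only difference is that you use just the easy inclusion of $H_1(T)$ into the invariants, coming from the constancy of $T$; your passage to duals there is justified by the monodromy-invariant cup-product pairing. The paper instead asserts that the invariant part equals $H^1(T,\Q)$, but given $B_0\subseteq T$ your one inequality suffices.
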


\begin{proof}
Let $U\subset\PP^1$ be the smooth locus of $p$ and let $F$ be a smooth geometric fiber. Deligne's global invariant-cycle theorem and fixed-part theorem \cite[Theorem~4.1.1 and Corollary~4.1.2]{DeligneHodgeII} identify the image of
\[
H^1(\widetilde X,\Q)\longrightarrow H^1(F,\Q)
\]
with the monodromy-invariant Hodge substructure
\[
H^0(U,R^1p_*\Q).
\]
For the family of Jacobians, this invariant Hodge structure is precisely $H^1(T,\Q)$. On the other hand, the image of $H^1(\widetilde X,\Q)$ in $H^1(F,\Q)$ is the rational Hodge structure attached to the image abelian subvariety $B_0$. Hence $B_0$ and $T$ have the same dimension. Since $B_0\subseteq T$, equality follows.
\end{proof}

\begin{lemma}\label{lem:NS-to-Jacobian}
Restriction to the generic fiber defines an injection
\[
\rho:\NS(\widetilde X)_{\tor}
\hookrightarrow
(A/T)(K)_{\tor}.
\]
\end{lemma}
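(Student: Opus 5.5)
First I need to define $\rho$. Take a torsion class $\xi\in\NS(\widetilde X)_{\tor}$ and lift it to a line bundle $\mathcal M$ on $\widetilde X$. Since the class is torsion, its intersection number with any curve is zero. In particular the degree on the generic fiber is zero, so $\mathcal M|_{\widetilde X_\eta}$ gives a point of $A(K)=\Pic^0(\widetilde X_\eta)$. (Here the section $E_z$ from Lemma~\ref{lem:ExcepDivSection} makes $\Pic_{\widetilde X_\eta}(K)$ equal to the $K$-points of the Picard scheme, so there is no Brauer obstruction.) The lift $\mathcal M$ is only defined up to $\Pic^0(\widetilde X)$. The restriction of $\Pic^0(\widetilde X)$ lands in $B_0(K)$, and $B_0=T$ by Lemma~\ref{lem:Fixedpart}. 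So the image in $(A/T)(K)$ is well defined and additive. If $N\xi=0$, then $\mathcal M^{N}\in\Pic^0(\widetilde X)$, so $N\rho(\xi)=0$ and $\rho$ lands in the torsion subgroup.

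The real work is injectivity. Suppose $\rho(\xi)=0$, so $\mathcal M|_{\widetilde X_\eta}\in T(K)$. I would proceed in three steps.

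\begin{enumerate}
\item Lift the trace part. Since $B_0=T$, the map $\Pic^0(\widetilde X)(\C)\to T(K)$ hits the image of $\tau$ on $T_0(\C)$. The trace of a nonconstant family has $T(K)=\tau(T_0(\C))$ up to a finite group, and I need to control that finite group. I would use Lang--Néron: $T(K)/\tau(T_0(\C))$ is finitely generated and torsion-free modulo the constant kernel. Alternatively, and more concretely, $\Pic^0(\widetilde X)\to T_0$ is surjective with connected kernel, so $T(K)$ is hit after a divisible adjustment. Since $\Pic^0(\widetilde X)(\C)$ is divisible, I can subtract an element of $\Pic^0(\widetilde X)$ and assume $\mathcal M|_{\widetilde X_\eta}$ is trivial.
\item Use the fibration structure. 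If $\mathcal M|_{\widetilde X_\eta}\cong\cO$, then $\mathcal M\cong\cO(D)$ with $D$ supported on fibers. Because $p$ has connected fibers, $D$ is a combination of fiber components. Since every fiber is irreducible or has one node (Lemma~\ref{lem:semiabelian-picard}), $D=\sum a_iF_{t_i}+\sum b_j C_j$, where the $C_j$ are components of reducible fibers.
\item Use numerical triviality. $\mathcal M$ is numerically trivial, so $D\cdot E_z=0$. This gives $\sum a_i+\sum_j b_j(C_j\cdot E_z)=0$. Also $D\cdot C_j=0$ for each component. By Zariski's lemma, on a reducible fiber $C_j+C_j'$, the restriction $D$ must be a multiple of the full fiber. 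Hence $D$ is numerically a multiple of the fiber class, and the multiple is $D\cdot E_z=0$. Since all fibers are linearly equivalent, $D\sim\bigl(\sum\text{coefficients}\bigr)F\sim0$. So $\mathcal M$ is trivial and $\xi=0$.
\end{enumerate}

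I expect step 1 to be the main obstacle. The claim that the image of $\Pic^0(\widetilde X)(\C)$ is all of $T(K)$ needs care. $T(K)$ contains $T_0(\C)$ (divisible) but may be larger because of the finite constant kernel of $\tau$. I would handle this by noting that $\rho(\xi)$ is torsion in $T(K)$, and that $\tau(T_0(\C))$ contains all of $T(K)_{\tor}$, since $T$ is constant. Then divisibility of $\Pic^0(\widetilde X)(\C)\twoheadrightarrow\tau(T_0(\C))$ gives the required lift.
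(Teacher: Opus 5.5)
Your overall route is the paper's. You define $\rho$ by restriction to $\widetilde X_\eta$, using $B_0=T$ for well-definedness, then twist by $\Pic^0(\widetilde X)$ to make the generic restriction trivial, represent the result by a vertical divisor, and kill it with an exceptional section and Zariski's lemma. Your Step~3 shows directly that a numerically trivial vertical divisor is an integral combination of full fibers of total degree $D\cdot E_z=0$, hence $D\sim0$. The paper instead proves that the subgroup of $\NS(\widetilde X)$ generated by vertical classes is free. Both rest on the same intersection computations, and yours even gives $\mathcal M\simeq\cO$ after the twist.

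Step~1, which you flag as the crux, is justified incorrectly as written.
\begin{enumerate}
\item The point $P:=\mathcal M|_{\widetilde X_\eta}\in T(K)$ is in general not torsion. Replacing $\mathcal M$ by $\mathcal M\otimes\mathcal P$ with $\mathcal P\in\Pic^0(\widetilde X)$ leaves $\xi$ unchanged but moves $P$ by an arbitrary point of $\tau(T_0(\C))$.
\item The phrase ``$\rho(\xi)$ is torsion in $T(K)$'' does not parse, since $\rho(\xi)=0$ lives in $(A/T)(K)$.
\item The finite kernel of $\tau$ cannot make $T(K)$ larger than $\tau(T_0(\C))$.
\item The appeals to Lang--N\'eron and to a connected kernel are unjustified and unnecessary.
\end{enumerate}

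The paper's fix is one line. $T$ is constant and $K=\C(\PP^1)$, so a $K$-point of $T$ is a morphism $\PP^1\to T$, hence constant. Thus $T(K)=T(\C)=\tau(T_0(\C))$. This is the image of $\Pic^0(\widetilde X)(\C)$, because $B_0=T$ forces $\Pic^0(\widetilde X)\to T_0$ to be surjective.

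If you prefer to keep your divisibility idea, apply the torsion argument to the right element. Since $\mathcal M^{\otimes N}\in\Pic^0(\widetilde X)$, the point $NP$ lies in the divisible group $I:=\tau(T_0(\C))$. Pick $Q\in I$ with $NQ=NP$. Then $P-Q\in T(K)_{\tor}=T(\C)_{\tor}\subseteq I$ by constancy of $T$, so $P\in I$. With either correction your proof is complete.
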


\begin{proof}
Let $[\cM]\in\NS(\widetilde X)_{\tor}$. Since intersection with a fiber is a homomorphism $\NS(\widetilde X)\to\Z$, torsion implies
\[
\deg(\cM|_{\widetilde X_\eta})=\cM\cdot F=0.
\]
Thus $\cM|_{\widetilde X_\eta}$ defines a point of $A(K)$. Replacing $\cM$ by an algebraically equivalent line bundle changes this point by the restriction of an element of $\Pic^0(\widetilde X)$, whose image is $T$ by Lemma~\ref{lem:Fixedpart}. Hence the class in $(A/T)(K)$ is well defined. If $n[\cM]=0$ in $\NS(\widetilde X)$, then $\cM^{\otimes n}\in\Pic^0(\widetilde X)$, so $\rho([\cM])$ is torsion.

It remains to prove injectivity. Suppose $\rho([\cM])=0$. Then $\cM|_{\widetilde X_\eta}$ is a $K$-point of $T$. Since $T$ is constant and $K=\C(\PP^1)$, every $K$-point of $T$ is constant: a rational map $\PP^1\dashrightarrow T$ extends to a morphism and every morphism from $\PP^1$ to an abelian variety is constant. Thus
\[
T(K)=T(\C).
\]
By Lemma~\ref{lem:Fixedpart}, the surjective morphism $\Pic^0(\widetilde X)\to T$ is also surjective on complex points. After twisting $\cM$ by an element of $\Pic^0(\widetilde X)$, without changing its N\'eron--Severi class, we may therefore assume that $\cM|_{\widetilde X_\eta}$ is trivial. A rational trivializing section then represents $\cM$ by a vertical Cartier divisor, because $\widetilde X$ is regular.

Let $V\subset\NS(\widetilde X)$ be the subgroup generated by vertical divisor classes. Choose one exceptional section $E$. In each reducible fiber, omit the unique irreducible component met by $E$ and retain all the other components. The class $F$ of a fiber, together with all retained components, generates $V$: the omitted component equals $F$ minus the sum of the retained components in that fiber.

Suppose that a rational linear combination of these generators is zero. Intersecting with $E$ first shows that the coefficient of $F$ is zero, because $E\cdot F=1$ and $E$ is disjoint from every retained component. For each reducible fiber separately, the intersection matrix of the retained components is negative definite by Zariski's lemma, so all their coefficients vanish. The generators are therefore $\Q$-linearly independent, and $V$ is a free abelian group. Since $[\cM]$ is torsion and lies in $V$, it is zero. Thus $\rho$ is injective.
\end{proof}

\begin{lemma}\label{lem:trace-good}
The trace image $T$ is the maximal abelian subvariety $A_0\subseteq A$ having everywhere good reduction.
\end{lemma}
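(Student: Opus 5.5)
We must show $T=A_0$, proving the two inclusions $T\subseteq A_0$ and $A_0\subseteq T$.

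\emph{The inclusion $T\subseteq A_0$.} By Lemma~\ref{lem:Fixedpart}, $T$ is a constant abelian subvariety: it is the base change of a complex abelian variety, possibly up to the constant finite kernel of $\tau$. A constant abelian variety $T_0\times_\C\PP^1$ is an abelian scheme over all of $\PP^1$. By the N\'eron mapping property it is the N\'eron model of $T$, so $T$ has good reduction everywhere. Since $A_0$ is the maximal abelian subvariety with everywhere good reduction, $T\subseteq A_0$.

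\emph{The inclusion $A_0\subseteq T$.} I plan to show that $A_0$ is isotrivial, and then constant. The Faltings height is additive up to isogeny. More usefully, Deligne's Arakelov inequality (Theorem~\ref{thm:Deligne-Arakelov}) can be applied to $A_0$ over $B=\PP^1$, which gives $q_B=0$ and $D=1$. Since $A_0$ has everywhere good reduction it is semistable with $s=0$, so
\[
h_{\mathrm{Fal}}(A_0)\leq\max\{\tfrac{g_0}{2}(-2),0\}=0 .
\]
Hence $h_{\mathrm{Fal}}(A_0)=0$, and the equality case of Theorem~\ref{thm:Deligne-Arakelov} shows that $A_0$ is isotrivial.

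Next I upgrade isotriviality to constancy. The abelian scheme $\mathscr A_0\to\PP^1$ (the N\'eron model, which is proper) has a period map to $\mathcal A_{g_0}$ that is constant. So $\mathscr A_0$ becomes constant after a finite étale cover of $\PP^1$, and the monodromy representation of $\pi_1(\PP^1)$ factors through a finite group. Since $\PP^1$ is simply connected, the local system $R^1$ is trivial. An abelian scheme with trivial variation of Hodge structure over a simply connected base is constant, by the rigidity and Torelli-type argument that the period map together with a trivialized local system determines the family. Therefore $A_0\simeq C_K$ for a complex abelian variety $C$.

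A constant abelian subvariety $C_K\hookrightarrow A$ corresponds to a morphism $C\to T_0$ by the universal property of the $K/\C$-trace, and hence $A_0\subseteq\tau((T_0)_K)=T$.

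\emph{Main obstacle.} The delicate step is passing from ``isotrivial'' to ``constant.'' I will carry it out through simple connectivity of $\PP^1$ as above. Alternatively, I can use Lemma~\ref{lem:Fixedpart}'s Hodge-theoretic description directly: the sub-VHS $H^1(A_0)$ of $R^1p_*\Q|_U$ has finite monodromy, because it has good reduction everywhere and is therefore unramified over $\PP^1$. It thus extends to a local system on $\PP^1$, which is trivial, so $H^1(A_0)$ lies in the invariant part $H^0(U,R^1p_*\Q)=H^1(T,\Q)$, forcing $A_0\subseteq T$ up to isogeny. Maximality of $A_0$ and the inclusion $T\subseteq A_0$ then give equality.
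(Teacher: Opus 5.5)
Your proof is correct and follows the paper's argument essentially step for step. You get $T\subseteq A_0$ because a constant abelian variety has good reduction everywhere; conversely, Deligne's Arakelov inequality with $q_B=0$ and $s=0$ forces $h_{\mathrm{Fal}}=0$, hence isotriviality, which upgrades to constancy because $\PP^1$ has no nontrivial finite \'etale covers, and the universal property of the trace then gives containment in $T$ (the paper phrases this for an arbitrary everywhere-good-reduction subvariety $C$ and uses a level-$n\geq3$ structure for the isotrivial-to-constant step; your alternative Hodge-theoretic argument is also fine but not needed).
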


\begin{proof}
The constant abelian subvariety $T$ has good reduction everywhere, so $T\subseteq A_0$. Conversely, let $C\subseteq A$ be an abelian subvariety with good reduction everywhere. Its N\'eron model is an abelian scheme over $\PP^1$. Applying Theorem~\ref{thm:Deligne-Arakelov} with $s=0$ gives $h_{\mathrm{Fal}}(C)=0$, hence $C$ is isotrivial. After imposing a full level structure of order at least three, an isotrivial abelian scheme becomes constant after a finite \emph{\'etale} cover of the base. Since $\PP^1_\C$ has no nontrivial connected finite \'etale cover, $C$ is already constant. The universal property of the trace then gives $C\subseteq T$. Hence $A_0=T$.
\end{proof}

\begin{proof}[Proof of Theorem~\ref{thm:surface}]
If $g(L)=0$, then the generic fiber is a rational curve and $A=0$. Lemma~\ref{lem:NS-to-Jacobian} therefore gives $\NS(\widetilde X)_{\tor}=0$, and the result follows from \eqref{eq:blowup-torsion} and $\Bfun(0)=1$.

Assume $g(L)\geq1$. Let
\[
\Gamma':=\operatorname{Im}(\rho)\subseteq(A/T)(K)_{\tor}.
\]
This is finite. Every element of $\Gamma'$ lies in the image of $A(K)$ by construction. Let $E_\Gamma\subseteq A(K)$ be its full inverse image. Then
\[
0\longrightarrow T(K)=T(\C)
\longrightarrow E_\Gamma
\longrightarrow\Gamma'
\longrightarrow0
\]
is exact. The group $T(\C)$ is divisible and therefore injective in the category of abelian groups; the sequence splits. Consequently there is a finite subgroup $\Gamma\subset A(K)_{\tor}$ that maps isomorphically onto $\Gamma'$ and satisfies
\[
\Gamma\cap T(K)=\{0\}.
\]
By Lemma~\ref{lem:trace-good}, $T=A_0$.

The Jacobian $A$ has dimension $g(L)$ and carries its principal theta polarization $\Theta$, for which $h^0(A,\Theta)=1$. Lemma~\ref{lem:semiabelian-picard} verifies semistability and the rank-one toric condition. Theorem~\ref{thm:P1} therefore gives
\[
\#\Gamma
\leq
\bigl(768g(L)^5(g(L)+2)\bigr)^{g(L)}
=\Bfun(g(L)).
\]
Finally, Lemma~\ref{lem:NS-to-Jacobian} and \eqref{eq:blowup-torsion} give
\[
\#\NS(X)_{\tor}
=\#\NS(\widetilde X)_{\tor}
=\#\Gamma'
=\#\Gamma
\leq\Bfun(g(L)).
\]
\end{proof}
\endgroup

\section{Bound for torsion sections of an abelian scheme}

\begingroup
We now prove Corollary~\ref{cor:AV}. Let
\[
\pi:\calA\longrightarrow B
\]
be a traceless abelian scheme over a smooth projective connected complex curve of genus $q=g(B)$, and let $g=\dim(\calA/B)\geq1$. Fix a relatively very ample symmetric line bundle $\calL$ on $\calA$, rigidified along the zero section. Write $A/K$ and $L$ for the generic fibers, where $K=\C(B)$, and put
\[
\omega_{\calA}:=\det e^*\Omega^1_{\calA/B}.
\]
If $q=0$, the Arakelov inequality with no bad fibers gives $\deg\omega_{\calA}=0$, so the abelian scheme is isotrivial; because $\PP^1$ has no nontrivial finite \'etale covers, it is constant. Tracelessness would then force $g=0$. Thus, in the nontrivial case considered here, $q\geq1$.

\begin{thm}\label{thm:NSboundA}
Define
\[
Q:=\frac12(g+3)!\,h^0(A,L)(q+1).
\]
Then
\[
\#\NS(\calA)_{\tor}\leq\Bfun(Q).
\]
\end{thm}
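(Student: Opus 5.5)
The plan is to apply Theorem~\ref{thm:polarized-variety} to the smooth projective variety $X=\calA$ of dimension $n=g+1\geq2$, with a suitable very ample line bundle built from $\calL$ and an ample bundle on $B$. We then bound $q(X,M)$ by $Q$. Since $\Bfun$ is increasing in $q$, it suffices to exhibit a very ample $M$ on $\calA$ with $q(\calA,M)\leq Q$.

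\textbf{Step 1 (choice of $M$).} Let $P$ be a point of $B$ and $N=\cO_B(P)^{\otimes k}$. Since $q\geq1$, the bundle $\cO_B(aP)$ is very ample once $a\geq 2q+1$. Take
\[
M:=\calL^{\otimes 3}\otimes\pi^*N^{\otimes c}
\]
with $\calL^{\otimes3}$ relatively very ample (Lefschetz: the cube of a relatively ample line bundle on an abelian scheme is relatively very ample). Then use the standard fact that a relatively very ample bundle twisted by a sufficiently positive pullback from the base is very ample. We need an explicit degree. The idea is to control this through the Hodge bundle: for symmetric rigidified $\calL$ one has $\pi_*\calL$ with $\det\pi_*\calL\simeq\omega_{\calA}^{-h^0/2}$ up to torsion (Moret-Bailly's key formula). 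This gives a concrete $c$ in terms of $\deg\omega_{\calA}$ and $q$.

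\textbf{Step 2 (bounding $\deg\omega_{\calA}$).} Because $\calA$ is an abelian scheme, there are no bad fibers, $s=0$. Theorem~\ref{thm:Deligne-Arakelov} then gives $\deg\omega_{\calA}\leq g(q-1)$. Also $K_{\calA}=\pi^*(K_B\otimes\omega_{\calA})$.

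\textbf{Step 3 (intersection numbers).} Write $\ell=c_1(\calL)$ and $f$ for a fiber class. Then $f^2=0$ and $\ell^{g}\cdot f=g!\,h^0(A,L)$. The remaining ingredient is $\ell^{g+1}$; for a symmetric rigidified $\calL$ it equals a multiple of $\deg\omega_{\calA}$ with sign making it nonpositive, or zero after normalization (compare \eqref{eq:canonical-self-intersection}). Expanding $(3\ell+cf)^{g}$ and $(3\ell+cf)^{g+1}$, and using $K_{\calA}=(2q-2+\deg\omega_{\calA})f$, the genus formula
\[
q(\calA,M)=1+\tfrac12\bigl(K_{\calA}\cdot M^{g}+gM^{g+1}\bigr)
\]
becomes a linear expression in $q$, $c$ and $\deg\omega_{\calA}$, times $g!\,h^0(A,L)$ and powers of $3$. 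Inserting the bounds from Steps~1--2 should yield a quantity of the shape $\frac12(g+3)!\,h^0(A,L)(q+1)$, with the factorial absorbing the combinatorial factors $(g+1)$, the $3^g$ and the constants. If powers of $3$ do not fit, I would instead use $\calL^{\otimes 2}$ plus base point freeness, or take $\calL$ itself to be relatively very ample, as fixed in the setup.

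\textbf{Main obstacle.} The delicate step is Step~1: getting an explicit, linear-in-$q$ twist $c$ making $M$ globally very ample. I would first try the criterion that $\calL^{\otimes 3}\otimes\pi^*N$ is very ample when $R^1\pi_*$ vanishes and $\pi_*(\calL^{3})\otimes N(-2P')$ is globally generated for all points $P'$. That generation reduces to $\mu_{\min}(\pi_*\calL^3)+\deg N>2q$. The lower bound on $\mu_{\min}$ comes from the fact that $\pi_*\calL^3$ is, up to isogeny, a twist of $\omega_{\calA}^{-1/2}$ by a flat bundle, so its slopes are at least $-\frac12\deg\omega_{\calA}$. This gives $c\approx 2q+1+\frac{g}{2}(q-1)$, which is linear in $q$ as needed.
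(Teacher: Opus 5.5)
Your architecture is the paper's. You twist a relatively very ample bundle by $\pi^*$ of a line bundle of degree $m_0=\lceil 2q+\frac g2(q-1)+1\rceil$, and get very ampleness from a slope condition on $E=\pi_*\calL$ via $\calA\hookrightarrow\PP_B(E)$. You use Moret-Bailly's $\mu(E)=-\frac12\deg\omega_{\calA}$, Deligne's $\deg\omega_{\calA}\leq g(q-1)$ and $K_{\calA}=\pi^*(K_B\otimes\omega_{\calA})$, and then compute $q(\calA,\cdot)$. However, two steps as you wrote them do not go through.

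\textbf{The choice $\calL^{\otimes 3}$.} It is incompatible with the stated $Q$, which is expressed through $h^0(A,L)$ for the fixed $\calL$. With $M=\calL^{\otimes3}\otimes\pi^*N^{\otimes c}$, every term of $q(\calA,M)$ carries $(3\ell)^g\cdot f=3^g g!\,h^0(A,L)$, so you land near $3^gQ$. The factor $(g+3)!/g!=(g+1)(g+2)(g+3)$ cannot absorb $3^g$, nor $2^g$ for $\calL^{\otimes 2}$. You must use $\calL$ itself, which is relatively very ample by the standing hypotheses. This is your last fallback and exactly what the paper does.

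\textbf{The slope lower bound.} Being a twist of $\omega_{\calA}^{-1/2}$ by a flat bundle does not give $\mu_{\min}(E)\geq-\frac12\deg\omega_{\calA}$. Flat bundles on a compact curve have degree $0$ but need not be semistable. For instance, for a non-isotrivial family the Gauss--Manin bundle $R^1\pi_*\C\otimes\cO_B$ surjects onto $R^1\pi_*\cO_{\calA}$, which has slope $-\deg\omega_{\calA}/g<0$. The missing input is the semistability of $E$.

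The paper gets semistability from the theta group. The maximal destabilizing subbundle is unique, hence invariant under the relative theta group of $\calL$. Its generic fibre would then be a proper nonzero invariant subspace of the irreducible Heisenberg representation on $H^0(A,L)$, which is impossible. Equivalently, after a finite \'etale base change carrying a theta structure, $E\simeq V\otimes M$ with $M$ a line bundle, so what you need is finite monodromy, not mere flatness.

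Given semistability, the right sufficient condition for very ampleness of $F=E\otimes N^{\otimes c}$ is $H^1(B,F(-P-P'))=0$ for all $P,P'\in B$. By Serre duality this holds once $\mu(F)>2q$, which is the Ionescu--Toma criterion the paper uses. Global generation of $F(-2P')$ is stronger than needed and would require slope $>2q+1$.

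\textbf{The top self-intersection.} State $\ell^{g+1}=0$ outright rather than hedging. For the symmetric rigidified $\calL$ one has $[n]^*\calL\simeq\calL^{\otimes n^2}$ and $\deg[n]=n^{2g}$, so $n^{2g}\ell^{g+1}=n^{2g+2}\ell^{g+1}$. Nonpositivity would suffice for an upper bound, but you gave no argument for it.

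With these fixes, $(\calL')^{g+1}=m_0(g+1)!\,h^0(A,L)$ and $K_{\calA}\cdot(\calL')^g\leq(g+2)(q-1)g!\,h^0(A,L)$. Together with $m_0\leq\frac{g+4}{2}q+1$ these give $q(\calA,\calL')<Q$, and monotonicity of $\Bfun$ finishes the proof.
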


Let
\[
E:=\pi_*\calL,
\qquad
r:=\rank E=h^0(A,L).
\]
Since $\calL$ is relatively ample on an abelian scheme, the higher cohomology of its restrictions to the fibers vanishes; cohomology and base change therefore show that $E$ is locally free and commutes with base change.

\begin{lemma}[Szpiro--Moret--Bailly]\label{lem:MB-formula}
For a rigidified relatively ample symmetric line bundle $\calL$ on an abelian scheme $\calA\to B$,
\begin{equation}\label{eq:MB-formula}
\frac{\deg E}{\rank E}=-\frac12\deg\omega_{\calA}.
\end{equation}
\end{lemma}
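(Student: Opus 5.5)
We need to prove the key formula $\deg\pi_*\calL = -\tfrac{r}{2}\deg\omega_{\calA}$ (Moret-Bailly's "formule clé"). The plan is to obtain it from the theorem of the cube together with Grothendieck--Riemann--Roch, as follows.

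\emph{Step 1: the determinant is determined up to torsion by $\omega_{\calA}$.} The key input is that $\det\pi_*\calL$, for a rigidified symmetric relatively ample $\calL$, satisfies
\[
(\det \pi_*\calL)^{\otimes 2}\simeq \omega_{\calA}^{-\otimes r}
\]
up to torsion in $\Pic(B)$. Torsion does not affect degrees, so this suffices. To see it, I would use the multiplication-by-$n$ maps. Since $\calL$ is symmetric and rigidified, the theorem of the cube gives $[n]^*\calL\simeq\calL^{\otimes n^2}$. Comparing $\det\pi_*([n]^*\calL)$ and $\det\pi_*(\calL^{n^2})$ as functions of $n$ forces the determinant to be a fixed rational multiple of $\omega_{\calA}$.

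\emph{Step 2: the direct route via Grothendieck--Riemann--Roch.} Rather than carry out the comparison in Step 1 by hand, I would apply GRR to $\pi$ and $\calL^{\otimes k}$ and read off the degree-one part on $B$:
\[
\deg\pi_*\calL^{k} = \pi_*\bigl(\mathrm{ch}(\calL^k)\,\mathrm{td}(T_{\calA/B})\bigr)_{(1)}.
\]
The ingredients are as follows.
\begin{itemize}
\item The relative tangent bundle is pulled back from the base: $T_{\calA/B}=\pi^*\mathrm{Lie}$. Hence $\mathrm{td}(T_{\calA/B}) = \pi^*\bigl(1+\tfrac12 c_1(\mathrm{Lie})\bigr)$, and $c_1(\mathrm{Lie})=-c_1(\omega_{\calA})$.
\item Writing $\ell=c_1(\calL)$, the right-hand side becomes
\[
\frac{k^{g+1}}{(g+1)!}\,\pi_*\ell^{g+1} \;-\; \frac{k^g}{2\,g!}\,\pi_*(\ell^g)\deg\omega_{\calA}.
\]
\item $\pi_*\ell^g=\deg_L A = g!\,r$.
\end{itemize}
This yields
\[
\deg \pi_*\calL^k = \frac{k^{g+1}}{(g+1)!}\,\ell^{g+1} - \frac{k^g\, r}{2}\deg\omega_{\calA}.
\]

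\emph{Step 3: kill the top term.} It remains to show $\ell^{g+1}=0$ on $\calA$. This is exactly the argument of \eqref{eq:canonical-self-intersection}: a rigidified symmetric $\calL$ satisfies $[n]^*\calL\simeq\calL^{n^2}$ globally on $\calA$, because both restrict to the same class on the generic fiber and both are trivial along $e$. Since $[n]$ is finite flat of degree $n^{2g}$, the projection formula gives
\[
n^{2g}\ell^{g+1}=n^{2(g+1)}\ell^{g+1},
\]
so $\ell^{g+1}=0$. Setting $k=1$ in Step 2 and dividing by $r$ gives \eqref{eq:MB-formula}.

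\emph{Main obstacle.} The delicate point is the global identification $[n]^*\calL\simeq\calL^{\otimes n^2}$ on all of $\calA$, not just on the generic fiber. I would establish it by the theorem of the cube for abelian schemes. The cube theorem gives the isomorphism up to a line bundle pulled back from $B$, and the rigidification along $e$ shows that this pulled-back bundle is trivial.

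A second possible gap is that GRR requires $R^i\pi_*\calL=0$ for $i>0$. This holds by the fiberwise vanishing together with cohomology and base change, as noted before the lemma.
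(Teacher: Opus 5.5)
Your proof is correct, but it takes a different route from the paper, which gives no argument of its own and simply cites Moret-Bailly's formule cl\'e from \cite{MB_model}.

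That theorem is considerably stronger than what is used here. It is an isomorphism between explicit tensor powers of $\det\pi_*\calL$ and $\omega_{\calA}^{-1}$, valid over an arbitrary base and proved with theta structures. Your Steps~2--3 give only the numerical identity over a smooth projective curve. They combine Grothendieck--Riemann--Roch with the vanishing $\ell^{g+1}=0$, and use $T_{\calA/B}\simeq\pi^*\Lie(\calA/B)$, $\pi_*\ell^g=g!\,r$, and $R^i\pi_*\calL=0$ for $i>0$. Only the degree is used later, in \eqref{eq:E-slope} and Proposition~\ref{prop:m0}, so your self-contained argument suffices. It also reuses the same $[n]$-trick that the paper runs in \eqref{eq:L-top-zero}.

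Two minor points. First, Step~1 is only a heuristic: making that comparison of determinants rigorous is essentially the content of Moret-Bailly's theorem. You should drop it, since Step~2 bypasses it. Second, in Step~3 you do not need the full cube theorem. Agreement on the generic fibre, together with $\calA$ being regular with integral fibres, already gives $[n]^*\calL\simeq\calL^{\otimes n^2}\otimes\pi^*N$ for some line bundle $N$ on $B$. Rigidification then forces $N\simeq e^*[n]^*\calL\otimes(e^*\calL)^{\otimes -n^2}\simeq\cO_B$.
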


\begin{proof}
This is Moret--Bailly's formula; see \cite[Chapter~VIII, Theorem~1.3]{MB_model}.
\end{proof}

\begin{lemma}\label{lem:E-semistable}
The vector bundle $E=\pi_*\calL$ is semistable.
\end{lemma}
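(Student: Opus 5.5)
The approach is to use the translation action of $\calA$ on itself together with the theta-group (Heisenberg) structure on $E$. Argue by contradiction. Suppose $F\subset E$ is a saturated subbundle with $\mu(F)>\mu(E)$, and take $F$ to be the maximal destabilizing subsheaf, i.e.\ the first step of the Harder--Narasimhan filtration. This subsheaf is unique, so it is preserved by every automorphism of $E$ that is compatible with the structure of $B$ (over $B$, or over a finite étale cover after pullback).

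Pick $n\geq3$ prime to $\deg$ of the polarization and pass to a finite étale cover $B'\to B$ over which $\calA[n]$ becomes constant; also add $K(\calL)$-level data. Pullback along a finite étale (more generally finite) cover multiplies all degrees by the cover degree. It therefore preserves slopes up to a common factor and preserves semistability in both directions, since in characteristic zero the Harder--Narasimhan filtration is Galois-invariant and descends. So we may assume $\calA[n]$ and $K(\calL)$ are constant over $B$.

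The main step uses the theta group. The theta group $\mathcal G(\calL^{\otimes n})$, or simply $\mathcal G(\calL)$ with its level subgroup $K(\calL)$, is then a constant finite group scheme over $B$. It acts on $E$ (or on $\pi_*\calL^{\otimes n}$) through translations $t_x^*\calL\simeq\calL$, and the center $\Gm$ acts by scalars. Because the acting group is constant, each element acts by an $\cO_B$-linear automorphism of $E$ up to a rigidification twist by a degree-zero line bundle. Uniqueness of the Harder--Narasimhan filtration then forces $F$ to be stable under the Heisenberg group action on the generic fiber. The Heisenberg representation on $H^0(A,L)$ is irreducible (Mumford) when we work with the theta group of the full $K(L)$. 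Hence $F_K=E_K$, contradicting properness.

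The delicate point is that translation by a torsion section $x$ only gives $t_x^*\calL\simeq\calL\otimes\pi^*M_x$, where $M_x$ is a line bundle on $B$. One must show that $M_x$ has degree zero, or is even torsion. I would check this with the rigidification: restricting along the zero section gives $M_x\simeq x^*\calL$. By symmetry and the theorem of the cube, $(x^*\calL)^{\otimes n^2}\simeq (nx)^*\calL=e^*\calL$, which is trivial. So $M_x$ is torsion, hence of degree zero, and twisting by it does not change slopes. Consequently $F\otimes M_x$ is again the maximal destabilizing subsheaf of $E\otimes M_x\simeq E$, so it coincides with the image of $F$, which is what the argument needs.

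If $\calL$ is not such that $K(\calL)$ acts irreducibly, I would first replace $\calL$ by a power, or decompose $E$ by the theta characters. An alternative is to deduce the result from Lemma~\ref{lem:MB-formula}: apply it to $\calL^{\otimes k}$, so that every isotypic piece has the same slope $-\tfrac12\deg\omega_\calA$, and then use the irreducibility argument. I expect this irreducibility and descent bookkeeping to be the main obstacle.
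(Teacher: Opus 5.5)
Your proposal is correct and follows essentially the same route as the paper: the theta group of $\calL$ acts on $E$ (up to twists by line bundles pulled back from $B$), the maximal destabilizing subsheaf is unique and hence invariant, and Mumford's irreducibility of $H^0(A,L)$ as a weight-one $\mathcal G(L)$-representation gives the contradiction; your étale base change and the twist $M_x\simeq x^*\calL$ just make explicit what the paper leaves implicit. Two harmless slips: in the torsion computation $n$ has to be the order of $x$, not the auxiliary level you chose prime to the degree (and $\deg M_x=0$ is not actually needed, since Harder--Narasimhan filtrations commute with twisting by any line bundle), and your final hedging paragraph is unnecessary because Mumford's irreducibility holds for every ample $L$.
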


\begin{proof}
The relative theta group $K(\calL)$ acts on $E$, and its generic-fiber representation on $H^0(A,L)$ is the irreducible weight-one theta representation. If $E$ were not semistable, its maximal destabilizing subbundle would be unique and therefore invariant under every automorphism coming from $K(\calL)$. Its generic fiber would then be a nonzero proper invariant subspace of $H^0(A,L)$, contradicting irreducibility. This is the function-field form of the argument in \cite[Proposition~A.3]{Bost_MWBourbaki}.
\end{proof}

Combining Lemmas~\ref{lem:MB-formula} and \ref{lem:E-semistable}, we have
\begin{equation}\label{eq:E-slope}
\mu(E)=-\frac12\deg\omega_{\calA}.
\end{equation}

\begin{prop}\label{prop:m0}
Let $M$ be a degree-one line bundle on $B$. Then
\[
\calL\otimes\pi^*M^{\otimes m}
\]
is very ample on $\calA$ for every
\begin{equation}\label{eq:m0}
m\geq m_0:=\left\lceil2q+\frac g2(q-1)+1\right\rceil.
\end{equation}
\end{prop}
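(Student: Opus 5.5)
We reduce very ampleness on $\calA$ to a fiberwise statement plus a positivity statement for the pushed-forward bundles on $B$. Since $\calL$ is relatively very ample and symmetric, $\calL$ restricted to each fiber is very ample (we may take it to be at least a cube of a relatively ample symmetric bundle, or assume relative very ampleness as fixed). Then $\calL$ embeds $\calA$ into $\PP(E)$ over $B$, via the surjection $\pi^*E\to\calL$ given by cohomology and base change. Under this embedding
\[
\calL\otimes\pi^*M^{\otimes m}=\cO_{\PP(E)}(1)\otimes\pi^*M^{\otimes m}\big|_{\calA}=\cO_{\PP(E\otimes M^{\otimes m})}(1)\big|_{\calA}.
\]
It therefore suffices to show that $\cO_{\PP(E\otimes M^{m})}(1)$ is very ample on $\PP(E\otimes M^{m})$, since the restriction of a very ample bundle to a closed subscheme is very ample.

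We use the standard criterion on a smooth projective curve of genus $q$: a vector bundle $F$ has $\cO_{\PP(F)}(1)$ very ample as soon as $F$ is globally generated and separates points and tangent vectors. This holds if $H^1(B,F\otimes\cO(-x-y))=0$ for all $x,y\in B$, since then $H^0(F)\to F\otimes\cO_{x+y}$ is surjective. For this we check that $F$ is generated on fibers over pairs of points and separates points in the same fiber, and points in distinct fibers. The latter requires a slightly finer argument with the linear span, but the surjectivity onto $F|_x\oplus F|_y$ and onto the first-order jets at $x$ both follow from the same vanishing. By Serre duality,
\[
H^1(B,F(-x-y))\simeq H^0(B,F^\vee\otimes\omega_B(x+y))^\vee.
\]

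Now take $F=E\otimes M^{\otimes m}$. By Lemma~\ref{lem:E-semistable}, $F^\vee\otimes\omega_B(x+y)$ is semistable, and by \eqref{eq:E-slope} its slope is
\[
\tfrac12\deg\omega_{\calA}-m+2q-2+2.
\]
A semistable bundle of negative slope has no nonzero sections, so the vanishing holds once $m>2q+\frac12\deg\omega_{\calA}$. It remains to bound $\deg\omega_{\calA}$. Since $\calA$ is an abelian scheme, there are no bad fibers, so Theorem~\ref{thm:Deligne-Arakelov} with $s=0$ gives
\[
\deg\omega_{\calA}=Dh_{\mathrm{Fal}}(A)\leq g(q-1).
\]
Hence $m\geq m_0=\lceil 2q+\frac g2(q-1)+1\rceil$ suffices.

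The main obstacle is the first step: identifying $\calL$ with the restriction of $\cO_{\PP(E)}(1)$ under a closed embedding $\calA\hookrightarrow\PP(E)$, which needs relative very ampleness of $\calL$ together with base change for $E$. A second, related check is making the vector-bundle very-ampleness criterion precise, namely separating points in different fibers and tangent directions that are horizontal. For this we would note that $\cO_{\PP(F)}(1)$ is very ample if and only if $H^0(F)\to H^0(F|_Z)$ is surjective for all length-two subschemes $Z\subset B$, together with relative very ampleness. Both of these are standard, so the numerical part above carries the content.
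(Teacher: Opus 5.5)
Your proposal is correct and follows the paper's route: embed $\calA$ in $\PP_B(E)$ via the relatively very ample $\calL$, identify $\calL\otimes\pi^*M^{\otimes m}$ with the restriction of $\cO_{\PP(E\otimes M^{\otimes m})}(1)$, and then use semistability of $E$, the Moret--Bailly slope $\mu(E)=-\frac12\deg\omega_{\calA}$, and Deligne's bound $\deg\omega_{\calA}\le g(q-1)$ (valid because $q\ge1$ in this setting). The only difference is that you prove the very-ampleness criterion for semistable bundles yourself, via $H^1(F(-x-y))=0$ and Serre duality, whereas the paper cites Ionescu--Toma's criterion, which is exactly that statement; your threshold $m>2q+\frac12\deg\omega_{\calA}$ is even marginally sharper than the paper's intermediate condition.
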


\begin{proof}
We use the quotient convention for projective bundles. Relative very ampleness gives a closed immersion
\[
\phi_{\calL}:\calA\hookrightarrow\PP_B(E)
\]
with
\[
\calL\simeq\phi_{\calL}^*\cO_{\PP(E)}(1).
\]
There is a canonical identification
\[
\PP_B(E\otimes M^{\otimes m})\simeq\PP_B(E),
\]
under which the tautological line bundle corresponds to
\[
\cO_{\PP(E)}(1)\otimes p^*M^{\otimes m}.
\]
Thus, if $E\otimes M^{\otimes m}$ is a very ample vector bundle on $B$, then $\calL\otimes\pi^*M^{\otimes m}$ is the restriction of a very ample line bundle on $\PP_B(E)$ and is therefore very ample on $\calA$.

The bundle $E\otimes M^{\otimes m}$ is semistable, and
\[
\deg(E\otimes M^{\otimes m})
=r\left(m-\frac12\deg\omega_{\calA}\right)
\]
by \eqref{eq:E-slope}. Ionescu--Toma's criterion \cite[Corollary~2]{IonescuToma_VeryAmpleVB} says that a semistable rank-$r$ vector bundle of degree at least $2qr+1$ is very ample. It is therefore enough that
\[
m\geq2q+\frac12\deg\omega_{\calA}+1.
\]
Since $\calA/B$ has good reduction everywhere, Deligne's Arakelov inequality gives
\begin{equation}\label{eq:omega-bound}
\deg\omega_{\calA}\leq g(q-1).
\end{equation}
The choice \eqref{eq:m0} satisfies the required inequality.
\end{proof}

\begin{proof}[Proof of Theorem~\ref{thm:NSboundA}]
Fix a degree-one line bundle $M$ on $B$ and set
\[
\calL':=\calL\otimes\pi^*M^{\otimes m_0}.
\]
By Proposition~\ref{prop:m0}, $\calL'$ is very ample on the smooth projective variety $\calA$ of dimension $g+1$.

The symmetry and rigidification of $\calL$ give $[n]^*\calL\simeq\calL^{\otimes n^2}$. Since $[n]:\calA\to\calA$ has degree $n^{2g}$, the projection formula yields
\[
n^{2g}\calL^{g+1}=n^{2g+2}\calL^{g+1},
\]
so
\begin{equation}\label{eq:L-top-zero}
\calL^{g+1}=0.
\end{equation}
Also $(\pi^*M)^2=0$. Since $L^g=g!h^0(A,L)$ on the generic abelian variety,
\begin{align}
(\calL')^{g+1}
&=(g+1)m_0\,\pi^*M\cdot\calL^g\notag\\
&=m_0(g+1)!h^0(A,L).
\label{eq:Lprime-top}
\end{align}

Translation-invariant differentials identify
\[
\Omega^1_{\calA/B}\simeq\pi^*e^*\Omega^1_{\calA/B},
\]
so
\[
K_{\calA/B}=\pi^*\omega_{\calA},
\qquad
K_{\calA}=\pi^*(K_B\otimes\omega_{\calA}).
\]
Because the square of every divisor pulled back from $B$ is zero,
\begin{align}
K_{\calA}\cdot(\calL')^g
&=\pi^*(K_B\otimes\omega_{\calA})\cdot\calL^g\notag\\
&=(2q-2+\deg\omega_{\calA})g!h^0(A,L)\notag\\
&\leq(g+2)(q-1)g!h^0(A,L),
\label{eq:K-intersection}
\end{align}
where the last inequality is \eqref{eq:omega-bound}.

The integer in Theorem~\ref{thm:polarized-variety} for $(\calA,\calL')$ is
\[
q(\calA,\calL')
=1+\frac12\left(K_{\calA}\cdot(\calL')^g
+g(\calL')^{g+1}\right).
\]
Since
\begin{equation}\label{eq:m0-upper}
m_0
=2q+1+\left\lceil\frac{g(q-1)}2\right\rceil
\leq\frac{g+4}{2}q+1,
\end{equation}
Equations~\eqref{eq:Lprime-top} and \eqref{eq:K-intersection} give
\begin{align*}
q(\calA,\calL')
&\leq
1+\frac{g!h^0(A,L)}2
\bigl((g+2)(q-1)+m_0g(g+1)\bigr)\\
&\leq
\frac{g!h^0(A,L)}2
\left((g+2)q+\left(\frac{g+4}{2}q+1\right)g(g+1)\right)\\
&<
\frac{g!h^0(A,L)}2
\left((g+3)(g+2)(g+1)q+g(g+1)\right)\\
&<
\frac12(g+3)!h^0(A,L)(q+1)=Q.
\end{align*}
Here the second line absorbs the initial $1$ because $g!h^0(A,L)(g+2)/2\geq1$.

The function $\Bfun(t)$ is increasing on the nonnegative integers. Applying Theorem~\ref{thm:polarized-variety} to $(\calA,\calL')$ therefore gives
\[
\#\NS(\calA)_{\tor}
\leq\Bfun\bigl(q(\calA,\calL')\bigr)
\leq\Bfun(Q).
\]
\end{proof}

\begin{proof}[Proof of Corollary~\ref{cor:AV}]
Set
\[
\cZ:=(\calA\times_B\calA^\vee)^4.
\]
Then $\cZ/B$ is traceless of relative dimension $8g$, and the inclusion into the first factor gives
\[
\#\calA(B)_{\tor}\leq\#\cZ(B)_{\tor}.
\]
By \cite[Section~11.1.2]{GaoGu_UBC}, torsion sections of a traceless abelian scheme are identified with the N\'eron--Severi torsion of its dual:
\[
\cZ(B)_{\tor}\simeq\NS(\cZ^\vee)_{\tor}.
\]
Zarhin's trick gives a principal polarization
\[
\lambda:\cZ\xrightarrow{\sim}\cZ^\vee
\]
represented by a rigidified relatively ample symmetric line bundle $\calL_\lambda$; see \cite[Appendix~B]{GaoGu_UBC}. The tensor cube $\calL_\lambda^{\otimes3}$ is relatively very ample, and on the generic fiber
\[
h^0(Z,L_\lambda^{\otimes3})=3^{8g}h^0(Z,L_\lambda)=3^{8g}.
\]
Applying Theorem~\ref{thm:NSboundA} to $(\cZ,\calL_\lambda^{\otimes3})$ gives the parameter
\[
Q=\frac12(8g+3)!\,3^{8g}(q+1).
\]
Consequently,
\[
\#\calA(B)_{\tor}
\leq\#\cZ(B)_{\tor}
=\#\NS(\cZ)_{\tor}
\leq\Bfun(Q),
\]
as claimed.
\end{proof}
\endgroup

\bibliographystyle{alpha}
\bibliography{mybib}
\end{document}